\documentclass[11pt]{article}
\author{Lennart Ronge}
\title{Formulas for Hadamard coefficients in terms of Green's operators}
\usepackage{amsmath}
\usepackage{amssymb}
\usepackage{amsfonts}
\usepackage[ansinew]{inputenc}
\usepackage{amsthm}
\usepackage{geometry}
\usepackage{hyperref}
\usepackage[UKenglish]{babel}
\usepackage{bbm}
\usepackage{tikz}
\usepackage{graphicx}
\usepackage {wrapfig}
\usepackage {csquotes}
\usepackage[style=alphabetic, backend=bibtex]{biblatex}
\def\<{\left\langle}
\def\>{\right\rangle}

\def\C{{\mathbb{C}}}
\def\R{{\mathbb{R}}}
\def\N{{\mathbb{N}}}

\def\insum#1{\sum\limits_{#1=0}^\infty}
\def\intR{\int\limits_\R}
\def\intP{\int\limits_0^\infty}

\def\O {\backslash \{0\}}
\def\F {\mathcal{F}}

\def\M{\mathcal{M}}
\def\casedist#1#2#3#4{
	\begin{cases}
		#1& \text{, if } #2\\
		#3& \text{, if } #4 \\
	\end{cases}}

\newtheorem{df}{Definition}[section]
\newtheorem{lemma}[df]{Lemma}
\newtheorem{thm}[df]{Theorem}
\newtheorem*{thm*}{Theorem}
\newtheorem{prop}[df]{Proposition}
\newtheorem{cor}[df]{Corollary}

\newtheorem{gn}[df]{Fixed Notation}
\newtheorem{rem}[df]{Remark}
\newtheorem{defprop}[df]{Definition/Proposition}

\DeclareMathOperator{\supp}{supp}

\DeclareMathOperator{\grad}{grad}
\DeclareMathOperator{\sign}{sign}

\DeclareMathOperator{\scal}{scal}

\begin{document}
\maketitle
\begin{abstract}
 We describe various ways of obtaining the Hadamard coefficients associated to a normally hyperbolic operator from the corresponding Green's operators. As the Hadamard expansion on its own is not enough for this, we include additional information either by considering something like a resolvent or powers of Green's operators or by looking at a product of the original manifold with the real line.
\end{abstract}
\section{introduction}
Hadamard coefficients are functions in two variables associated to a normally hyperbolic operator $P$ that can be viewed as the Lorentzian equivalent of the Riemannian heat kernel coefficients. They play an important role, for example, in the local equivariant index theorem (\cite{BS}) or in the Hadamard renormalization of the stress-energy tensor (\cite{DeFo}). The Hadamard coefficients, particularly their values on the diagonal, also encode information about the underlying manifold and the associated wave operator. The first coefficient (starting count at $0$) for the d'Alembertian is proportional to the scalar curvature of the manifold (or from a physics point of view, the density of the Einstein-Hilbert action). The coefficient at half the dimension for the square of a Dirac operator is related to the index density for this operator.

Our goal here is to develop a formula for expressing the Hadamard coefficients in terms of the Green's operators of a given wave operator (or suitable generalizations thereof). A particular motivation for this comes from the spectral action (see e.g. \cite{CoCh}) which has an asymptotic expansion in terms of heat kernel coefficients (the Riemannian counterpart of the Hadamard coefficients). Having a Lorentzian version of a spectral action would thus give one the Hadamard coefficients, while an operator theoretic description of the Hadamard coefficients might lead the way towards a spectral action in Lorentzian settings where the wave operator is not self-adjoint (for a version in certain settings where the wave operator is self-adjoint, see \cite{DaWr}).

The purpose of this paper is twofold: to present the main results of \cite{Ron} in a more accessible fashion (the thesis was written for completeness and rigor, sometimes at the cost of readability), which will be done up to Section \ref{Hadamextract}, and to present a new, alternative approach using a product construction that yields different, simpler results, which will be developed in sections \ref{ProdDiag} and \ref{ProdOffDiag}.

A brief overview of the main objects and definitions is given at the end of this introduction (\ref{overview}) and can be used for easy reference. 

The main motivation for defining the Hadamard coefficients comes from the Hadamard expansion for the (advanced/retarded) Green's operators (something like inverses to $P$). This reads
\[G^\pm\sim \insum{k}V^kR^\pm (2k+2,\cdot),\]
where the $V^k$ are the Hadamard coefficients and $R^\pm$ are the (advanced/retarded) Riesz distributions (morally, something like powers of the distance function). Ideally, we would like to "invert" this expansion to obtain a formula for the Hadamard coefficients (at least on the diagonal) in terms of $G^\pm$. However, this is hopeless, for much the same reason that a sum
\[\sum\limits_{k=0}^Na_k(x) x^k\]
does not determine the coefficients $a_k$ (not even at $x=0$). We thus need to somehow introduce an extra parameter to get access to additional information and make the problem well-posed.

The starting point is to integrate the kernel $G_x$ of the difference $G=G^+-G^-$ at $x$ along a timelike curve $w$ through $x$, paired with a test function $f$. Rescaling $f$ by a parameter $s$, we obtain a function (informally written)
\begin{equation}
	\label{dfl}
	L(s):=\intR G_x(w(t))f(\tfrac{t}{s})dt
\end{equation}
whose asymptotic behavior captures the asymptotic behavior of $G_x$ near $x$ (along the chosen geodesic) and is thus related to the Hadamard coefficients. In section \ref{caf}, we show (Theorem \ref{intexp}) that this has an asymptotic expansion of the form (for $w$ a unit speed geodesic)
\[L(s)\sim\sum\limits_{k,n=0}^\infty a(k,n)(V_x^k\circ w)^{(2n)}(0)s^{2k+2n+3-d}.\]
For product spacetimes, the time derivatives of the Hadamard coefficients and hence all summands for $n\neq 0$ vanish and we can read off the Hadamard coefficients directly from the expansion of $L(s)$ (this is no accident - for product spacetimes $L(s)$ is directly related to the wave trace and the spectral action on the corresponding space). However, for general spacetimes we have additional error terms that make it impossible to extract the Hadamard coefficients from this alone.

The first way to circumvent this problem is to look at something like a resolvent, i.e. to introduce an auxiliary parameter $z\in \C$ and consider the Green's operators for $P-z$ instead of $P$. This is the approach we will pursue in Section \ref{Hadamextract} (equivalently, one could also consider powers of Green's operators, see Corollary \ref{Vwithpowers}). The Hadamard coefficients of $P-z$ can be expressed in terms of the Hadamard coefficients of $P$ as (Proposition \ref{Vz})
\[V^k(z)=\sum\limits_{m=0}^k\binom{k}{m}z^mV^{k-m}.\]
Inserting this into the expansion of $L$ for $P-z$ gives an additional $z$-dependence. This allows us to eliminate the derivative terms and solve for the Hadamard coefficients on the diagonal. We obtain our first main theorem.
\begin{thm*}[\ref{main}]
	Assume the setting laid out in \ref{overview}. We have for any $o\in \N_0$
	\[V^k_x(x)=\sum\limits_{m=0}^k q(k,m,o)L_{k,m+o},\]
	where 
	\[q(k,m,o):=\frac{\pi^{\frac{d}{2}-1}4^{k+m+o}(m+o)!k!}{\M'(f)({2k+2m+2o-d+3})}\binom{\frac{d}{2}-1-o-k}{m}\binom{2k+o+1-\frac{d}{2}}{k-m}\]
	are some coefficients independent of the geometry of $M$ and
	\[L_{k,m+o}=w^*(G_{P-z,x})[f(\tfrac{\cdot}{s})][[s^{2k+2m+2o-d+3}]][[z^{m+o}]]\]
	are the expansion coefficients of the function $L$ in (\ref{dfl}), defined for $P-z$ instead of $P$, that correspond to the given powers of $s$ and $z$.
\end{thm*}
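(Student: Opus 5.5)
The plan is to combine the two ingredients already prepared in the paper: the asymptotic expansion of $L(s)$ from Theorem \ref{intexp}, applied to $P-z$ instead of $P$, and the formula $V^k(z)=\sum_{m}\binom{k}{m}z^mV^{k-m}$ from Proposition \ref{Vz}. Substituting the second into the first gives a double expansion in $s$ and $z$:
\[L(s,z)\sim\sum_{j,n\ge 0}a(j,n)\sum_{l=0}^{j}\binom{j}{l}z^l\,(V_x^{j-l}\circ w)^{(2n)}(0)\,s^{2j+2n+3-d}.\]
Fix $N:=k+m+o$. The coefficient of $s^{2N-d+3}$ collects all pairs with $j+n=N$, and extracting the power $z^{m+o}$ forces $l=m+o$. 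Hence
\[L_{k,m+o}=\sum_{n=0}^{k-m}a(N-n,n)\binom{N-n}{m+o}\,(V_x^{k-m-n}\circ w)^{(2n)}(0).\]
This is a triangular linear system. Setting $i:=k-m$, each $L_{k,m+o}$ is a combination of $(V^{i-n})^{(2n)}(0)$ for $n=0,\dots,i$. The unknowns of fixed total order $i$ are the quantities $(V^{i-n})^{(2n)}(0)$, and $V^k_x(x)$ appears only through the index pair $(i,n)=(k,0)$, i.e.\ for $m=0$.

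The key step is then to take the linear combination $\sum_{m=0}^k q(k,m,o)L_{k,m+o}$ and check that every term except $(V^k)(x)$ cancels. I will reorganize the double sum by the unknown $(V^{p})^{(2n)}(0)$ with $p+n=k-m$. For fixed $(p,n)$ the contributions come from the single index $m=k-p-n$. This looks as if no cancellation is possible, which tells me the bookkeeping has to be redone more carefully.

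Reconsidering: the unknowns are indexed by $(p,n)$, and each $(p,n)$ appears in several $L_{k',m'}$. For a fixed $k$, however, the sum is only over $m$. So for fixed $(p,n)$ with $p<k$ there is exactly one contributing $m$, and cancellation would be impossible unless $a(j,n)\binom{j}{m+o}$ itself vanishes. I therefore expect that the precise form of $a(j,n)$ from Theorem \ref{intexp} involves factors like $\binom{\frac d2-1-j}{\cdot}$ or Gamma ratios. The binomials in $q$ suggest the mechanism: $\binom{\frac d2-1-o-k}{m}$ and $\binom{2k+o+1-\frac d2}{k-m}$ should come from inverting a matrix of binomial/Gamma coefficients.

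The honest plan is therefore as follows. First, write out $a(j,n)$ explicitly, including its $\M'(f)$ factor; the factor $\M'(f)(2N-d+3)$ depends only on $N$ and can be divided out uniformly. Second, also use the other indices. The index $m$ in $L_{k,m+o}$ fixes $N$ and the $z$-power jointly, so the coefficients of a fixed unknown $(V^p)^{(2n)}$ across the different $m$ come with different $N$. After dividing out $\M'$, the remaining dependence should be polynomial/binomial in $m$, and cancellation of all $n\ge1$ terms reduces to an identity of the form
\[\sum_{m}\binom{\frac d2-1-o-k}{m}\binom{2k+o+1-\frac d2}{k-m}\,c(m,n)=0\quad(n\ge1),\]
which is a Chu--Vandermonde type identity. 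I would verify this identity, possibly via generating functions in $z$, and separately check that the $n=0$, $p=k$ coefficient normalizes to $1$ with the prefactor $\pi^{\frac d2-1}4^{N}(m+o)!k!$.

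The main obstacle is this combinatorial identity. I would first try to prove it by writing the binomial coefficients as coefficients of $(1+x)^\alpha(1+x)^\beta$ and reading off the coefficients. Failing that, I would try induction on $k$, using the freedom in $o$: the statement holds for every $o$, which suggests a recursion in $o$ relating consecutive $L_{k,\cdot}$.
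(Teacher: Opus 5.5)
Your overall strategy is the paper's: insert Proposition \ref{Vz} into Theorem \ref{intexp} for $P-z$, then find a combination of the $L_{k,m+o}$ that kills all derivative terms. The execution has a genuine gap, and it starts with an index slip. With $N=k+m+o$, $j+n=N$ and $l=m+o$, the coefficient that appears is $V^{j-l}=V^{N-n-m-o}=V^{k-n}$, not $V^{k-m-n}$. Hence
\[L_{k,m+o}=\sum_{n=0}^{k}\binom{N-n}{m+o}\,a(N-n,n)\,(V^{k-n}_x\circ w)^{(2n)}(0),\]
which is Proposition \ref{Expwithz}. The system is therefore not triangular. All $L_{k,m+o}$, $m=0,\dots,k$, involve the same $k+1$ unknowns $(V^{k-n}_x\circ w)^{(2n)}(0)$, and $V^k_x(x)$ occurs in every one of them. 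Your ``single contributing $m$, so no cancellation is possible'' paradox is an artefact of this slip. Your later remark, that a fixed unknown appears for all $m$ with varying $N$, is the correct picture, but it contradicts the formula you wrote and you never reconcile the two.

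The second and decisive gap is that the actual content of Theorem \ref{main} is only announced. That content is that $q(k,\cdot,o)$ is a row of a left inverse, i.e.\ $\sum_{m=0}^k q(k,m,o)\binom{N-n}{m+o}a(N-n,n)=\delta_{n0}$ for $0\le n\le k$. You leave $c(m,n)$ unspecified and do not check the normalization. The paper also omits this computation and refers to \cite{Ron}, but your proof must supply it. It is short once the indices are right. Inserting $a$, the factors $\pi^{\frac{2-d}{2}}$ and $\M'(f)(2N+3-d)$ cancel against $q$, and the powers of $4$ and the factorials collapse, so the condition becomes
\[\frac{4^n\, n!\,k!}{(2n)!\,(k-n)!}\sum_{m=0}^k\binom{A}{m}\binom{B}{k-m}\binom{m-A}{n}=\delta_{n0},\qquad A=\tfrac{d}{2}-1-o-k,\quad B=2k+o+1-\tfrac{d}{2},\]
with $A+B=k$. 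So your $c(m,n)$ is $\binom{m-A}{n}$. From $\binom{A}{m}\binom{m}{j}=\binom{A}{j}\binom{A-j}{m-j}$ and Vandermonde, $\sum_m\binom{A}{m}\binom{B}{k-m}\binom{m}{j}=\binom{A}{j}\binom{k-j}{k-j}=\binom{A}{j}$ for $j\le k$. Via the Newton expansion, this gives $\sum_m\binom{A}{m}\binom{B}{k-m}p(m)=p(A)$ for every polynomial $p$ of degree $\le k$. Taking $p(m)=\binom{m-A}{n}$ yields $\binom{0}{n}=\delta_{n0}$, and the prefactor equals $1$ at $n=0$. Without this computation or an equivalent one, the proposal does not establish the formula.
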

While this formula is somewhat complicated, it can be simplified by suitable choices of $o$, considerably so for the Hadamard coefficients with $k<\tfrac{d}{2}$ in the even dimensional case (see Remark \ref{qsipm}). 

The other way to introduce additional information is to consider Green's operators on a product $\tilde M:= M\times \R$ (with spacelike $\R$). The crucial fact we use here is that the Hadamard coefficients are independent of the additional product parameter, while the Riesz distributions are not. In Section \ref{ProdDiag}, we will use this to find another formula for the diagonal Hadamard coefficients. We replace the curve $w$ in the above construction by 
\[w_\xi(x,t):=(w(\xi(t)),\sqrt{\xi^2-1}\ t),\]
which is unit speed in $\tilde M$, while the speed of its projection to $M$ is proportional to $\xi$. Thus the derivatives of the Hadamard coefficients along $\xi$ will be proportional to powers of $\xi$, while the Riesz distributions will, to leading order, be unaffected. As a consequence,  taking the expansion coefficient for $\xi^0$ will eliminate all the error terms involving derivatives of the Hadamard coefficients. We obtain
\begin{thm*}[\ref{prodmain}]
Assume the setting laid out in \ref{overview} for $\tilde M=M\times\R$. With $w_\xi$ as defined above, we have
\[V^k_x(x)=\frac{4^k k!}{\pi^{\frac{1-d}{2}}\M'(f)(2k+2-d)}w_\xi^*(\tilde G)[f_s][[s^{2k+2-d}]][[\xi^0]].\]
\end{thm*}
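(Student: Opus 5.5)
We would prove Theorem \ref{prodmain} by applying the general expansion of Theorem \ref{intexp} on the product $\tilde M=M\times\R$, now to the curve $w_\xi$. The two inputs are the Hadamard coefficients of $\tilde P=P-\partial_u^2$ on $\tilde M$ and the pullback of the Riesz distributions along $w_\xi$.

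\textbf{Step 1: Hadamard coefficients on the product.} Since $\tilde M$ carries a product metric and $\tilde P$ acts trivially in the extra coordinate $u$, the transport equations show that
\[
\tilde V^k_{(x,u)}(y,v)=V^k_x(y),
\]
independent of $u,v$. We verify this by uniqueness of solutions of the transport equations. The point is that the Synge world function is additive, $\tilde\Gamma=\Gamma-(u-v)^2$. The modified Van Vleck--Morette determinant of the $\R$-factor is $1$, so the recursion for $V^k$ on $M$ reproduces itself on $\tilde M$. The only change is the dimension shift $d\mapsto d+1$ in the Riesz distributions.

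\textbf{Step 2: expand along $w_\xi$.} The curve $w_\xi$ is a unit speed geodesic of $\tilde M$ whenever $w$ is a geodesic of $M$. Theorem \ref{intexp}, applied in dimension $d+1$, therefore gives
\[
w_\xi^*(\tilde G)[f_s]\sim\sum_{k,n}\tilde a(k,n)\,\big(\tilde V^k\circ w_\xi\big)^{(2n)}(0)\,s^{2k+2n+2-d}.
\]
By Step 1 we have $\tilde V^k\circ w_\xi(t)=V^k_x(w(\xi t))$. Hence
\[
\big(\tilde V^k\circ w_\xi\big)^{(2n)}(0)=\xi^{2n}\,(V^k_x\circ w)^{(2n)}(0).
\]
The coefficients $\tilde a(k,n)$ depend only on $f$, $d$ and the Riesz distributions of $\tilde M$ evaluated along a unit speed timelike geodesic. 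They are therefore independent of $\xi$.

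\textbf{Step 3: read off the coefficient.} Fix the power $s^{2k+2-d}$. The terms contributing to it are the pairs $(k',n)$ with $k'+n=k$, and each carries the factor $\xi^{2n}$. Taking the $\xi^0$ coefficient leaves only $n=0$, which gives
\[
w_\xi^*(\tilde G)[f_s][[s^{2k+2-d}]][[\xi^0]]=\tilde a(k,0)\,V^k_x(x).
\]
It remains to compute $\tilde a(k,0)$. This is the pairing of the pulled-back Riesz distribution $R(2k+2,\cdot)$ in dimension $d+1$ with $f$. For $R^+-R^-$ along $t\mapsto t$, this is a multiple of $|t|^{2k+2-(d+1)}$ times the Riesz normalisation $\frac{2^{1-2k-2}\pi^{(2-(d+1))/2}}{\Gamma(k+1)\Gamma(k+1-\frac{d-1}{2})}$. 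Combining the Gamma factor with the Mellin transform $\M'(f)(2k+2-d)$ as defined in the paper should give exactly $\frac{\pi^{\frac{1-d}{2}}\M'(f)(2k+2-d)}{4^k k!}$.

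\textbf{Main obstacle.} The delicate point is making Step 3 rigorous. The asymptotic expansion in $s$ must hold uniformly enough in $\xi$ that each $s$-coefficient is a genuine polynomial in $\xi$ (or a function with a well-defined $\xi^0$ coefficient). Two things need checking here:
\begin{itemize}
\item $w_\xi$ is only defined and timelike for $\xi>1$, so the expansion coefficients must be shown to extend polynomially in $\xi$;
\item the remainder terms of Theorem \ref{intexp} must not introduce further $\xi$-dependence at the fixed power of $s$.
\end{itemize}
We would handle both by redoing the proof of Theorem \ref{intexp} with the Taylor expansion of $V^k_x\circ w$ made explicit. Each $s$-coefficient is then a finite sum, exactly polynomial in $\xi$. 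Tracking the constants in $\tilde a(k,0)$ is routine but error-prone, so we would check the result against the flat case.
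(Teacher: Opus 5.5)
Your overall route matches the paper's. You apply Theorem \ref{intexp} on $\tilde M$ (dimension $d+1$) along $w_\xi$, use $\tilde V^k_{\tilde x}\circ w_\xi=V^k_x(w(\xi\,\cdot))$, take the $\xi^0$ coefficient to remove the $n\geq 1$ terms, and divide by $a(k,0,f,d+1)=\frac{\pi^{\frac{1-d}{2}}}{4^kk!}\M'(f)(2k+2-d)$. This constant can be read off directly from the formula for $a$ in Theorem \ref{intexp}, and your proposed computation via the Riesz normalisation does give it.

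The gap is in Step 2. There you take $w$ to be a geodesic, so that $w_\xi$ is a unit speed geodesic and the factor $\nu_{w_\xi}^{k-\frac{d-1}{2}}$ from Theorem \ref{intexp} drops out. In this theorem, however, $w$ is only a future oriented timelike curve with $w(0)=x$ and $\nu_w(0)=1$; the paper explicitly says the product approach no longer requires geodesics. For such $w$, $w_\xi$ is not a geodesic and $\nu_{w_\xi}\not\equiv 1$. The quantities Theorem \ref{intexp} produces are then $\big(\nu_{w_\xi}^{k-\frac{d-1}{2}}\,V^k_x(w(\xi\,\cdot))\big)^{(2n)}(0)$, and your identity ``$=\xi^{2n}(V^k_x\circ w)^{(2n)}(0)$'' is false. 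As written, you only prove the special case where $w$ is a unit speed geodesic.

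What is missing is the content of Lemma \ref{xidiff}. Since $\nu_{w_\xi}(t)=\xi^2(\nu_w(\xi t)-1)+1$, we have $\nu_{w_\xi}(0)=1$ and $\nu_{w_\xi}^{(j)}(0)=\xi^{j+2}\nu_w^{(j)}(0)$ for $j\geq 1$. So every $t$-derivative brings out at least one factor of $\xi$, whether it falls on $V^k_x(w(\xi t))$ or on $\nu_{w_\xi}$. Hence for $n\geq 1$ the $2n$-th derivative at $0$ is a polynomial in $\xi$ all of whose monomials have degree at least $2n$. In general it is not a single monomial $\xi^{2n}$, but it still has no $\xi^0$ part. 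The $n=0$ term is $\nu_{w_\xi}(0)^{k-\frac{d-1}{2}}V^k_x(x)=V^k_x(x)$ for every $\xi$. With this in place of your identity, Step 3 goes through unchanged.

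The ``main obstacle'' you describe is not actually an obstacle. For each fixed $\xi>1$ the coefficient of $s^{2k+2-d}$ is uniquely determined, and Theorem \ref{intexp} gives it exactly as a finite sum. That sum is a polynomial function of $\xi$ on $(1,\infty)$, whose $\xi^0$ coefficient is well defined. You therefore need no uniformity in $\xi$, no extension to $\xi\leq 1$, and no re-proof of Theorem \ref{intexp}; the remainders never contribute to a fixed power of $s$.
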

Unfortunately, the construction requires $\xi >1$, so we cannot simply set $\xi=0$ here. The geometric reason is that a curve that is constant in $M$-direction would be spacelike and thus immediately leave the support of $G_x$.
This does, however work if we want to compute coefficients away from the diagonal, for $y$ in the future or past of $x$. In that case we can simply integrate in the product direction, using a cutoff to localize around the edge of the causal future of $x$. As the Hadamard coefficients are constant on these lines and the Riesz distributions have singularities/zeros of different orders at the edge, we can directly extract the Hadamard coefficients from the expansion at the boundary.
This leads to
\begin{thm*}[Corollary \ref{offdV}]
	Assume again the setting described in \ref{overview}
	Then for $y\in I^\pm(x)$, $k\in \N$ and
	\[\chi_{\epsilon,x,y}(r):=|r|\chi\left(\frac{\Gamma_x(y)-r^2}{\epsilon}\right),\] we have
	\[V^k_x(y)=2^{2k+1}k!\pi^{\tfrac{d-1}{2}}\frac{\Gamma}{\M(\chi)}(k+\tfrac{3-d}{2})\iota_y^*(\tilde G^\pm_x)[\chi_{\epsilon,x,y}][[\epsilon^{k+\tfrac{3-d}{2}}]].\]
\end{thm*}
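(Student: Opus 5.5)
The plan is to read off the expansion at the edge of the light cone from the Hadamard expansion of the Green's operator on the product $\tilde M=M\times\R$. Write $\tilde x=(x,0)$ and consider the spacelike line $\iota_y(r)=(y,r)$ through $(y,0)$. On $\tilde M$ the world function splits as $\tilde\Gamma_{\tilde x}(y,r)=\Gamma_x(y)-r^2$, using the paper's sign convention in which $\Gamma_x>0$ on $I^\pm(x)$. The Hadamard coefficients of the product operator are independent of the $\R$-direction, so $\tilde V^k_{\tilde x}(y,r)=V^k_x(y)$.

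Along $\iota_y$, the Riesz distributions $\tilde R^\pm(2k+2,\cdot)$ on the $(d+1)$-dimensional manifold $\tilde M$ are smooth functions of $r$ inside the cone. Explicitly,
\[C(2k+2,d+1)\,(\Gamma_x(y)-r^2)_+^{\,k+1-\frac{d+1}{2}},\]
where $C(\alpha,n)=\frac{2^{1-\alpha}\pi^{(2-n)/2}}{\Gamma(\alpha/2)\Gamma((\alpha-n)/2+1)}$. The exponent here is $k+\tfrac{1-d}{2}$, which is to be read as a distribution in $r$ when it is negative. Thus
\[\iota_y^*(\tilde G^\pm_{\tilde x})\sim\sum_k V^k_x(y)\,C(2k+2,d+1)\,(\Gamma_x(y)-r^2)_+^{k+\frac{1-d}{2}}.\]
This pullback is justified because $\iota_y$ is spacelike and transversal to the light cone wherever $r^2=\Gamma_x(y)$. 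The remainder of a truncation at order $N$ lies in $C^{K}$ with $K\to\infty$ as $N\to\infty$.

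Next, pair this expansion with $\chi_{\epsilon,x,y}$ and substitute $u=\Gamma_x(y)-r^2$. The factor $|r|\,dr$ becomes $\tfrac12\,du$ on each half-line $r\gtrless0$, so the two halves together give
\[\int (\Gamma_x(y)-r^2)_+^{\,a}\,|r|\,\chi\!\left(\tfrac{\Gamma_x(y)-r^2}{\epsilon}\right)dr=\int_0^{\Gamma_x(y)}u^a\chi(u/\epsilon)\,du=\epsilon^{a+1}\M(\chi)(a+1),\]
with $a=k+\tfrac{1-d}{2}$ and $a+1=k+\tfrac{3-d}{2}$. This holds provided $\supp\chi$ is compact, so that for small $\epsilon$ the upper limit $\Gamma_x(y)$ is irrelevant; negative $a$ is handled by meromorphic continuation of the Mellin transform. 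So the $k$-th term contributes exactly $V^k_x(y)\,C(2k+2,d+1)\,\M(\chi)(k+\tfrac{3-d}{2})\,\epsilon^{k+\frac{3-d}{2}}$. The powers of $\epsilon$ are distinct for distinct $k$.

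For the remainder, a $C^K$ function $h(u)$ paired with $\chi(u/\epsilon)$ gives $\sum_j h^{(j)}(0)\epsilon^{j+1}\M(\chi)(j+1)/j!+O(\epsilon^{K+2})$. This is a series in integer powers of $\epsilon$. In odd $d$ these integer powers could collide with the target exponent, which is also an integer. To handle this, I would check that the remainder of the Hadamard series vanishes to high order on the light cone, hence it contributes only $O(\epsilon^{N'})$ with $N'$ large. Alternatively, one notes that in odd $d$ all terms vanish to high order at $u=0$ and the Taylor coefficients at the relevant order come only from the $k$-th summand. This is the main obstacle: making the asymptotic expansion rigorous near the cone with the distributional pullback. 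I would handle it using the paper's remainder estimates for the Hadamard expansion together with smoothness of the pullback away from the singular set.

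Finally, simplify the constant. We have $C(2k+2,d+1)=2^{-2k-1}\pi^{\frac{1-d}{2}}/\bigl(k!\,\Gamma(k+\tfrac{3-d}{2})\bigr)$, since $\Gamma(k+1)=k!$. Solving for $V^k_x(y)$ gives
\[V^k_x(y)=2^{2k+1}k!\,\pi^{\frac{d-1}{2}}\,\frac{\Gamma}{\M(\chi)}(k+\tfrac{3-d}{2})\,\iota_y^*(\tilde G^\pm_x)[\chi_{\epsilon,x,y}][[\epsilon^{k+\frac{3-d}{2}}]],\]
which is the claimed formula.
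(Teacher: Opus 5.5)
Your proposal is correct and follows the paper's proof essentially step by step. The shared steps are the product Hadamard expansion with $\tilde V^k_{\tilde x}(y,r)=V^k_x(y)$, and the substitution $u=\Gamma_x(y)-r^2$, which turns each summand into $\epsilon^{a+1}\M(\chi)(a+1)$ (extended to all $\alpha$ by holomorphy). The constant is also the same, $C(2k+2,d+1)=2^{-2k-1}\pi^{\frac{1-d}{2}}/\bigl(k!\,\Gamma(k+\tfrac{3-d}{2})\bigr)$. The remainder step you flag as the main obstacle closes in one line as you suggest, with no collision issue in odd $d$: $E_N$ is $C^m$ and supported in $J^\pm(\tilde x)$, so $|E_N(y,r)|\le C|\tilde\Gamma_{\tilde x}(y,r)|^m$, and since $\chi_{\epsilon,x,y}$ is supported where $|\tilde\Gamma_{\tilde x}|<\epsilon$, its contribution is $O(\epsilon^m)$.
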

\subsection{Overview of the main definitions}
\label{overview}
	We provide here a brief overview over the objects that will be defined and frequently used later on.
	\begin{itemize}
		\item $M$ is a globally hyperbolic Lorentzian manifold of dimension $d\geq 2$.
		\item $U\subset M$ is open, globally hyperbolic, causally compatible and geodesically convex.
		\item $P$ is a normally hyperbolic operator over $M$ (\ref{dfP}).
		\item $x$ is an arbitrary point in $U$ that serves as basepoint.
		\item $G^\pm_{P,x}$ denotes the distributional kernels of the advanced/retarded Green's operators of $P$, with $x$ inserted into the second argument. We define $G_{P,x}:=G^+_{P,x}-G^-_{P,x}$.
		Unless relevant, we usually omit $P$ from the notation (\ref{dfG}).
		\item $V^k$ are the Hadamard coefficients for $P$ on $U$ (\ref{dfV}).
		\item $\Gamma$ denotes the "Lorentzian geodesic distance squared" (\ref{dfGamma}) or, depending on the context and the argument, the gamma function.
		\item $R^\pm$ are the advanced/retarded Riesz distributions and $R:=R^+-R^-$ (\ref{dfRiesz}).
		\item $w$ is a timelike curve with $w(0)=x$ and unit initial speed, sometimes (in Section \ref{Hadamextract} and the results thereof) assumed to be a unit speed geodesic.
		\item $\M$ denotes the Mellin transform and $\M':=\frac{\M}{\Gamma(\tfrac{\cdot+1}{2})}$.
		\item $f\in C_c^\infty(\R)$ is an odd function whose Mellin transform has non-vanishing values or residues whenever we need to divide by them.
		\item $[\cdot]$ denotes evaluation of a distribution at a test function
		\item $[[\cdot]]$ denotes taking expansion coefficients, i.e. if $a(s)\sim \insum{k}a_ks^k$, then $a[[s^n]]:=a_n$.
		\item $\iota_x(t):=(x,t)$
		\item Objects with a tilde are the corresponding objects on $\tilde M:=M\times \R$ (\ref{dfMtilde}, \ref{dftilde}).
	\end{itemize}

\section{Preliminaries}
\subsection{The building blocks}
In this section, we introduce Green's operators, Riesz distributions and Hadamard coefficients, which will play a central role in this thesis. We will follow \cite{BGP} here, other descriptions can be found in in \cite {Gun} and \cite {Fri}.

Some of the objects defined in this section can only be constructed over a geodesically convex set as they depend on choices of geodesics. To make sure that Green's operators will restrict well to this subset, we need to require further properties.

\begin{gn}
	Throughout this paper, let $M$ be a globally hyperbolic Lorentzian manifold of dimension $d\geq 2$ and fix a globally hyperbolic, causally compatible, geodesically convex open subset $U$ of $M$.
\end{gn}
Globally hyperbolic means the manifold has a cauchy hypersurface, i.e. a surface intersected exactly once by each inextendable causal curve. Convexity means that any two points can be joined by a unique geodesic. Causal compatibility means that two points in $U$ that are causally related in $M$ are also causally related in $U$.
As a consequence of \cite[Corollary 2 and Remark 14]{Min}, every point in $M$ has a Basis of neighborhoods that have these properties.

Normally hyperbolic operators are the Lorentzian equivalent of  Laplace-type operators. They carry information about the geometry of the manifold. Green's operators are something like inverses for these. The standard example for a normally hyperbolic operator on $M$ is the d'Alembertian.
\begin{df}
	\label{dfP}
	A normally hyperbolic operator on $M$ is a second order differential operator on a vector bundle $E$ over $M$ whose principal symbol is given by minus the metric $g$ (times the identity on fibres).
\end{df}
\begin{gn}
	For the rest of this paper, let $P$ be a normally hyperbolic operator on a vector bundle $E$ over $M$. \label{gnP}
	For the sake of simplicity and clarity, we will assume that $E|_{U}=U\times\C^k$ so that we can compare different fibres without further complications. This constitutes no loss of generality, as any bundle over the contractible set $U$ is trivializable.
\end{gn}
Every normally hyperbolic operator can be written as $\square^\nabla+B$ for some connection $\nabla$ and some bundle endomorphism $B$. These are uniquely determined by the Operator. In the following, we denote by $\nabla$ the connection induced by $P$ in this way. With this, we have the Leibnitz rule
\[P(fh)=fPg-2\nabla_{\grad(f)}h+(\square f)h.\]

On the usual spaces of functions, $P$ will not be invertible. Instead, we have a well-posed Cauchy problem (see \cite[Theorem 3.2.11]{BGP}). To get something invertible, one needs to specify support conditions (see \cite[Definition 3.4.1 and Corollary 3.4.3]{BGP} and \cite[Theorem 3.8 and Lemma 4.1]{GH}):
\begin{defprop}
	\label{dfG}
	A set is called past/future compact, if its intersection with the past/future of every point is compact. Let $\Gamma_\pm(E)$ denote the set of smooth sections of $E$ whose support is past/future compact.
	
	The Operator $P$ viewed as a map $\Gamma_\pm(E)\rightarrow \Gamma_\pm(E)$ is invertible. Its inverse is called the advanced/retarded Green's operator and denoted by $G^\pm$. These have the property that $G^\pm(f)$ is supported in the past/future of $\supp(f)$. $G^\pm$ continuously extend to distributions with past/future compact support.
	
	More generally, for any normally hyperbolic operator $Q$, we denote by $G^\pm_Q$ the advanced/retarded Green's operators associated to $Q$.
	
	We denote by $G^\pm_x$ the schwartz kernel of $G^\pm$ evaluated at $x$ in the second coordinate (informally $G^\pm(\cdot,x)$, more formally $G^\pm(\delta_x)$).
\end{defprop}
Note that the definition of $G^\pm$ on these slightly unusual function spaces not only allows us to view the Green's operators as actual inverses of $P$, it also allows us to take powers of Green's operators without problems.

As $U$ is globally hyperbolic, Green's operators also uniquely exist on $U$ instead of $M$. As $U$ is causally compatible, by \cite[Proposition 3.5.1.]{BGP} we have the following:
\begin{prop}
	For a test section $\phi$ supported in $U$, we have
	\[G^\pm_{P|_U}(\phi|_U)=(G^\pm_{P}\phi)|_U.\]
\end{prop}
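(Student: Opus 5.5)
The strategy is to use uniqueness of Green's operators on the globally hyperbolic manifold $U$. We show that the restricted operator $\phi\mapsto (G^\pm_P\phi)|_U$ has the defining properties of $G^\pm_{P|_U}$: it inverts $P|_U$ on $\phi$, and it has the correct support. Uniqueness then forces equality.

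Fix a test section $\phi$ supported in $U$. First I would set up the candidate. Since $\supp\phi$ is compact, it is past and future compact in $M$. So $u:=G^\pm_P\phi$ is defined, smooth, satisfies $Pu=\phi$ on $M$, and has support in $J^\mp_M(\supp\phi)$. Here I use the paper's convention that $G^+$ maps into the past of the support; only consistency matters. Restricting, $v:=u|_U$ is smooth on $U$. Because $P$ is a differential operator, hence local, $P|_U v=(Pu)|_U=\phi|_U$.

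Next I would check the support condition. We have $\supp v\subset J^\mp_M(\supp\phi)\cap U$. By causal compatibility, a point $y\in U$ that is causally related in $M$ to some $p\in\supp\phi\subset U$ is causally related to $p$ in $U$. Hence $J^\mp_M(\supp\phi)\cap U=J^\mp_U(\supp\phi)$. Now take any $q\in U$ and intersect $\supp v$ with $J^\pm_U(q)$. The result lies in $J^\mp_U(K)\cap J^\pm_U(q)$ with $K$ compact. In the globally hyperbolic $U$ this set is compact, since it is a closed subset of a causal diamond-type set. So $\supp v$ is past/future compact in $U$, i.e. $v\in\Gamma_\pm(E|_U)$.

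To conclude, Definition/Proposition \ref{dfG} applied to $U$ says that $P|_U\colon\Gamma_\pm(E|_U)\to\Gamma_\pm(E|_U)$ is bijective. Both $v$ and $G^\pm_{P|_U}(\phi|_U)$ lie in $\Gamma_\pm(E|_U)$ and are mapped by $P|_U$ to $\phi|_U$. Therefore they coincide, which is the claim.

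I expect the main obstacle to be the support step. There one must be careful that the causal future/past computed in $M$ and in $U$ agree, which is exactly where causal compatibility is needed. One must also be careful that compactness of $J_U^\mp(K)\cap J_U^\pm(q)$ holds in $U$ itself rather than merely in $M$. If the latter is awkward, I would fall back on the fact that $U$ is itself globally hyperbolic, so causal diamonds $J^+_U(K)\cap J^-_U(K')$ are compact for compact $K,K'$.
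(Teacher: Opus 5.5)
Your proof is correct and follows essentially the same route as the paper, which simply cites \cite[Proposition 3.5.1]{BGP}: the restriction $(G^\pm_P\phi)|_U$ solves the equation for $P|_U$, causal compatibility lets you compute its support with the causal relation of $U$ instead of $M$, and uniqueness on the globally hyperbolic $U$ (which you phrase as injectivity of $P|_U$ on $\Gamma_\pm(E|_U)$) forces equality. The one blemish is sign bookkeeping inherited from a typo in the paper, and it is purely cosmetic: with the standard convention $\supp G^\pm\phi\subset J^\pm(\supp\phi)$ you should intersect $\supp v$ with $J^\mp_U(q)$, not $J^\pm_U(q)$, to get the past/future compactness that defines $\Gamma_\pm$.
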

This means that the Schwartz kernel of $G^\pm_{P|_U}$ is just the restriction of that of $G^\pm_{P}$. Justified by this compatibility, we will not distinguish between these two in the following.

We now turn to the Riesz distributions. As all constructions rely heavily on the exponential map, the Riesz distributions (and later on also the Hadamard coefficients) will only be defined on convex subsets of $M$. Note that their values will be dependent on the choice of convex subset.

The Riesz distributions on $U$ are roughly speaking a Lorentzian analogue to "powers of the distance functions". They are described in detail in \cite[sections 1.2 and 1.4]{BGP}. Here, they  will play a role comparable to that of powers of $x$ in a Taylor series. Before we can define Riesz distributions on subsets of $M$, we will first define them on Lorentzian vector spaces (up to isomorphism, this just means Minkowski space, but the primary use case is on tangent spaces of $M$), where they are, roughly speaking, "powers of the norm".
\begin{df}
	Let $V$ be a Lorentzian vector space with Lorentzian bilinear form $\eta$.
	For $x\in V$ define 
	\[\gamma(x):=-\eta(x,x).\]
\end{df}
Note that $\gamma(x)$ strictly positive/negative if and only if $x$ is timelike/spacelike.
\begin{defprop}
	\label{dfRiesz}
	Let $V$ be a d-dimensional Lorentzian vector space.
	For $\alpha\in \C$ with $\Re(\alpha)\geq 0$, define Riesz distributions $R_\pm(\alpha)$ as the distribution on $V$ given by the function
	\[R_\pm(\alpha)(x)=\casedist{c_\alpha\gamma(x)^{\frac{\alpha-d}{2}}}{x\in J_\pm(0)}{0}{x\notin J_\pm(0)}\]
	with
	\[c_\alpha:=\frac{2^{1-\alpha}\pi^\frac{2-d}{2}}{\Gamma(\frac{\alpha}{2})\Gamma(\frac{\alpha-d+2}{2})}.\]
	The map $\alpha\mapsto R_\pm(\alpha)$ is holomorphic as distribution-valued map and extends uniquely to a holomorphic map on all of $\C$. For arbitrary $\alpha\in\C$, define $R_\pm(\alpha)$ to be the value of this holomorphic extension. (The prefactor $c_\alpha$ is chosen such that $\square R_\pm(\alpha+2)=R_\pm(\alpha)$.) In case it is clear what $V$ is, we omit it from the notation.
\end{defprop}
We transport this from tangent spaces to our convex subset $U$ using the exponential map:
\begin{df}
	\label{dfGamma}
	For $x,y\in U$ and $\alpha\in \C$,
	define the "squared geodesic distance"
	\[\Gamma_x(y):=\gamma\left((exp_x)^{-1}(y)\right)\]
	and define the Riesz distributions
	\[R_\pm(\alpha,x):=(exp_x)^{-1*}(R_\pm(\alpha)|_{Dom(exp_x)})\]
	on $U$ (here $exp_x$ refers to the exponential map on $U$, which is a diffeomorphism, as $U$ is convex). 
\end{df}
Note that $\Gamma$ and $R_\pm(\alpha,x)$ depend on the choice of $U$. However, as we do not vary $U$, we omit this dependence from the notation.
$\Gamma_x$ is positive precisely on $J(x)$ and $R_\pm(\alpha,x)$ is supported in $J_\pm(x)$. For $\Re(\alpha)\geq d$, $R_\pm(\alpha,x)$ is given by \[R_\pm(\alpha,x)(y)=\casedist{c_\alpha\Gamma_x(y)^{\frac{\alpha-d}{2}}}{y\in J_\pm(x)}{0}{y\notin J_\pm(x)}.\]
Moreover, $R_\pm(0,x)=\delta_x$ is the Dirac distribution at $x$ (see \cite[Proposition 1.4.2]{BGP}).

We now turn to the second class of objects we want to define in this section: the Hadamard coefficients.
The motivation for defining the Hadamard coefficients is that we want to have the formal equality
\[P\insum{k}V^{k}_x R_\pm(2k+2,x)=\delta_x\]
for some smooth sections $V^{k}(x)$. Formally, this would mean that the infinite sum should be a fundamental solution for $P$ (i.e. the Schwartz kernel of $G^\pm$). This requires us to investigate the application of $P$ to a product involving Riesz distributions.
\begin{defprop}
	\label{dfrho}
	\label{Prho}
	Define for $V\in \Gamma(E\otimes E_x)$:
	\[\rho_x V:= \nabla_{\grad\Gamma_x}V-\left(\frac{1}{2}\square\Gamma_x-d\right)V.\]
	Then we have for any $\alpha\in \C\O$, $x\in U$ and $V\in \Gamma(E|_U\otimes E^*_x)$
	\[P(VR_\pm(\alpha+2,x))=(PV)R_\pm(\alpha+2,x)-\tfrac{1}{\alpha}((\rho_x-\alpha)V)R_\pm(\alpha,x)\]
\end{defprop}
The reader may immediately forget the definition of $\rho_x$ and only remember the second formula.

We are now ready to define the Hadamard coefficients (see \cite[Definition 2.2.1 and Proposition 2.3.1]{BGP}):
	\begin{defprop}
		\label{dfV}
		For every $x\in U$ there is a unique family of sections $V^{k}_x\in \Gamma(E\otimes E^*_x|_U)$ (indexed by $k\in \N$) that satisfies the transport equations 
		\[(\rho_x-2k) V^{k}_x=2kP V^{k-1}_x\]
		(for $k=0$, the right hand side is set to $0$) subject to the initial condition
		\[V^{0}_x(x)=1.\]
		These sections are called the Hadamard coefficients (for $P$). The map $(y,x)\mapsto V^{k}_x(y)$ is a smooth section in $E\boxtimes E^*|_{U\times U}$.
	\end{defprop}
This definition is designed to make the following formal calculation work:
\begin{align*}
	&P\insum{k}V^k_xR(2k+2,x)\\
	&=\lim\limits_{\alpha\rightarrow 0}P(V^0_xR(2+\alpha,x)+\sum\limits_{k=1}^\infty P(V_kR(2k+2,x))\\
	&=V^0_xR_0 +\lim\limits_{\alpha\rightarrow 0}\tfrac{1}{\alpha}(\rho_xV^0_x)R(\alpha,x) +(PV_0)R_\pm(2,x)+\\&\ \sum\limits_{k=1}^\infty (PV_k)R(2k+2,x)-\tfrac{1}{2k}((\rho_x-2k)V)R_\pm(2k,x)\\
	&=V^0_x(x)\delta_x+0+\sum\limits_{k=1}^\infty \tfrac{1}{2k}(2kPV^{k-1}_x-(\rho_x-2k) V^{k})R(2k,x)\\
	&=\delta_x+\sum\limits_{k=1}^\infty0\\
	&=\delta_x
\end{align*}
However, in general the remainder $(PV^{N}_x)R_\pm(2N+2,x)$, which we get when replacing the summation limit $\infty$ with some finite number $N$ may not vanish for $N\rightarrow \infty$. All we get is that the remainder becomes arbitrarily differentiable for $N$ large enough, i.e. we get an asymptotic expansion in differentiability order.
This is the motivation for the definition of the Hadamard coefficients and one of the key results that this paper will build on:
We have the asymptotic expansion
\[G^\pm_x|_U\sim\insum{k}V^{k}_xR_\pm(2k+2,x).\]
This expansion is shown in \cite [Proposition 2.5.1]{BGP} (a pedantic reader may note that the expansion in \cite{BGP} is shown under different assumptions on the neighborhood $U$. We refer this reader to \cite[Theorem 3.4]{Ron} for a version with this choice of $U$).

\begin{rem}
	Note that both $G^\pm_x$ and $R_\pm(2k+2,x)$ are supported inside $J_\pm(x)$ hence the same is true for the remainder term in the expansion. Thus all derivatives that the remainder term has must be vanishing at the boundary of $J_\pm(x)$, in particular at $x$. This means that differentiability of the remainder leads to decay near $x$ of the corresponding order (see \cite[Theorem 2.5.2]{BGP} for a detailed statement).
	
	There is another propagator associated to $P$, known as the Feynman propagator, that also has a similar asymptotic expansion involving the Hadamard coeficients. However, in the Feynman case nothing needs to vanish anywhere, so differentiability will not transform into decay estimates. That is the reason why we need the causal propagator in the approach presented here.
\end{rem}

Our construction depends on the choice of neighborhood $U$, and so do the values of the Hadamard coefficients. However, when replacing $U$ by a subset, the values stay the same. In particular, the values of the Hadamard coefficients on the diagonal (i.e. $V^k_x(x)$) do not depend on the choice of $U$.

For technical reasons, we will be looking not at the two Green's operators individually, but at their difference, sometimes referred to as the causal propagator.
\begin{df}
	\label{dfcp}
	Set
	\[G:=G^+-G^-,\]
	\[R(\alpha):=R_+(\alpha)-R_-(\alpha)\]
	and
	\[R(\alpha,x):=R_+(\alpha,x)-R_-(\alpha,x).\]
\end{df}
Taking the difference of the asymptotic expansions for $G^\pm_x$, we obtain
\[G_{x}|_U\sim\insum{k}V^{k,U}_xR(2k+2,x).\]
The advantage of taking this difference is that some singularities on the diagonal cancel out, which will allow us to restrict to a timelike curve.
\subsection{Wavefronts}
Wavefront calculus is a tool for studying singularity structures of distributions. It allows one to perform operations that do not work for arbitrary distributions on those distributions that do not have the wrong singularities. In our context, it will be used to restrict Riesz distributions and Green's kernels to a timelike curve.

The wavefront set is defined via its complement: For a distribution $u$ on $\R^n$, a pair $(x,\xi)\in\R^n\times \R^n\O$ is not in the wavefront set of $u$ if and only if for every cut-off function $\phi$ with $\phi(x)=1$, the Fourier transform $\F(\phi u)$ is rapidly decaying (i.e. the product with any polynomial is bounded) in a conical neighborhood of $\xi$. This can be roughly interpreted as "$u$ is smooth in the direction $\xi$ near the point $x$". Thus the wavefront set captures in which direction singularities occur at each point. As the wavefront set is locally defined and transforms well under diffeomorphisms, it can also be defined on arbitrary manifolds as a subset of the cotangent bundle.

The reason we need wavefronts is the following fact: If the wavefront set of a distribution doesn't intersect the conormal of a submanifold, then the distribution can be restricted to that submanifold. This is a special case of a more general theorem about pullbacks, as a restriction is just the pullback by the inclusion map. What does it mean that the pullback "can be defined"? It means that the space of distributions with wavefronts contained in a specified closed conic set that does not intersect the normal bundle can be equipped with reasonable seminorms, such that the pullback continuously extends to this space (see \cite[Proposition 5.1]{BDH}). This continuity ensures that the extension of the pullback has the properties that one would expect from a pullback (e.g. $w^*(f\cdot g)=(f\circ w)\cdot w^*(g)$ for a smooth function $w$).

Our goal is to restrict the Riesz distributions and Green's kernels to a timelike curve $w$, i.e. we want to consider the pullbacks $w^*(R(\alpha,x))$ and $w^*(G_x)$. As the conormal bundle of a timelike curve contains only spacelike vectors, we need to show that these are not contained in the corresponding wavefront sets. Moreover, in order to work with the pulled-back Riesz distributions, we need to know that they are still holomorphic in $\alpha$. We sketch the proof in the following (see \cite[Section 4.1]{Ron} for full details). Note that the individual Riesz distributions $R^\pm(\alpha,x)$ do not satisfy this. They have problematic singularities at the origin that only cancel when taking the difference. This is the reason why we always work with differences of Riesz distributions and Green's operators.

\begin{thm}
	\label{WFRiesz}
	The family of Riesz distributions on Minkowski space is contained in and holomorphic in the space of Distributions whose wavefront does not contain spacelike Vectors and away from zero only contains multiples of $d\gamma$ at points where $\gamma$ vanishes.
\end{thm}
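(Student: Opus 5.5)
The plan is to treat the statement in two independent parts and then assemble them. Part one: away from the origin the singularities lie only on the light cone and are conormal to it. Part two: near the origin no spacelike covector appears. As stressed in the paragraph preceding the statement, the individual families $R_\pm(\alpha)$ cannot satisfy this at the origin, since $R_\pm(0)=\delta_0$ has full wavefront set there. I will therefore prove the statement for the family the paper actually uses, $R(\alpha)=R_+(\alpha)-R_-(\alpha)$. The space in question is $\mathcal D'_\Lambda$ with the closed cone
\[\Lambda=\{(0,\xi):\xi \text{ not spacelike}\}\cup\{(x,\tau\, d\gamma_x):\gamma(x)=0,\ x\neq0,\ \tau\neq 0\},\]
equipped with its Hörmander topology. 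Holomorphy means holomorphy for the seminorms $\sup_{\xi\in C}(1+|\xi|)^N|\F(\phi u)(\xi)|$, with $C$ closed cones disjoint from $\Lambda$ over $\supp\phi$, together with weak holomorphy.

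\textbf{Away from $0$.} Near a point $x\neq 0$ with $\gamma(x)=0$, the function $\sign(x^0)$ is locally constant, so locally $R_\pm(\alpha)$ equals either $0$ or $c_\alpha\,\gamma_+^{(\alpha-d)/2}$. Here $\gamma_+^\lambda$ denotes the pullback of the one-variable family $t_+^\lambda$ under $\gamma$, which is a submersion away from $0$. I write $c_\alpha\,t_+^{(\alpha-d)/2}=\tilde c_\alpha\, t_+^{\lambda}/\Gamma(\lambda+1)$ with $\lambda=(\alpha-d)/2$ and $\tilde c_\alpha$ entire. The family $t_+^\lambda/\Gamma(\lambda+1)$ is entire in $\mathcal D'_{\{0\}\times(\R\O)}(\R)$, since its Fourier transform is explicitly $\Gamma$-regularised $(t\mp i0)^{-\lambda-1}$ and the required bounds are locally uniform in $\lambda$. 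Pullback by a submersion is continuous in the Hörmander topologies, so the composed family is holomorphic with wavefront in $\{\tau\,d\gamma\}$ over $\{\gamma=0\}$. These covectors $d\gamma_x=-2\eta(x,\cdot)$ are null, hence not spacelike. At points with $\gamma\neq 0$ the family is smooth, and holomorphic in $C^\infty$. This settles everything away from $0$.

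\textbf{At $0$.} Here I work on the Fourier side. Since $R_+(\alpha)$ is supported in the future cone, $\F R_+(\alpha)$ is the boundary value from a tube of the holomorphic function $\xi\mapsto(\eta^{-1}(\xi,\xi))^{-\alpha/2}$, suitably branched, with the same up to a constant for $R_-$ from the opposite tube. This gives the classical formulas $\F R_\pm(\alpha)(\xi)=\big(\eta^{-1}(\xi\mp i0e^0,\xi\mp i0e^0)\big)^{-\alpha/2}$ up to normalisation. On the spacelike region both boundary values are evaluations of the same real-analytic function at a positive real argument, so they coincide and cancel in the difference. Consequently $\F R(\alpha)$ is supported in the closed causal cone in $\xi$-space, and depends holomorphically on $\alpha$ as a tempered distribution. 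For a cutoff $\phi$, $\F(\phi R(\alpha))=\hat\phi*\F R(\alpha)$ then decays rapidly in any closed cone of spacelike directions. The decay estimates are locally uniform in $\alpha$, because $\hat\phi$ is Schwartz and the support of $\F R(\alpha)$ stays at angular distance from that cone. This excludes spacelike covectors at every point, in particular at $0$, and gives holomorphy in the relevant seminorms.

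\textbf{Main obstacle.} I expect the hardest step to be the rigorous Fourier computation together with the cancellation on spacelike $\xi$, uniformly in $\alpha$ and including the $\alpha$ for which the boundary values are not locally integrable. I would handle it by first proving the formulas for $\Re\alpha$ large, where everything is a function. I would then extend by analytic continuation in $\alpha$ in $\mathcal S'$, using that the support statement for $\F R(\alpha)$ is closed under such limits. Combining the two parts gives $R(\alpha)\in\mathcal D'_\Lambda$, holomorphic in $\alpha$.
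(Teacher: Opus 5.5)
Your proof is correct. Away from the origin it is the paper's argument: pull back the one-variable family by the submersion $\gamma$. At the origin you take a genuinely different route.

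The paper computes no Fourier transform. It writes $R(\alpha)=\sigma\,\gamma^*(f_\alpha)$ with the signed cut-off $\sigma$. For each spacelike $\xi$ it uses the time-reversing Lorentz map $O_\xi$, which fixes the pairing with $\xi$ and preserves $\gamma$ and an invariant cut-off $\psi_\xi$, but flips $\sigma$. This gives $\F(\psi_\xi R(\alpha))(\lambda\xi)=0$ along the whole ray. The paper must then still pass to conic neighbourhoods and to arbitrary cut-offs, a step it only asserts. It gets holomorphy abstractly, from continuity of the linear map $f\mapsto\sigma\gamma^*f$ applied to the holomorphic one-variable family.

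Your use of Riesz's Fourier--Laplace formula gives instead the global fact that $\supp\F R(\alpha)$ lies in the closed causal cone. This is the Fourier-side form of the same time-reversal antisymmetry. The estimate for $\hat\phi*\F R(\alpha)$ then handles all cut-offs, all points and full closed spacelike cones at once. Holomorphy follows from holomorphy in $\mathcal S'$ together with a Banach--Steinhaus/Montel argument. The key observation there is that the test functions $\eta\mapsto(1+|\xi|)^N\chi(\eta)\hat\phi(\xi-\eta)$, with $\xi$ in a closed spacelike cone and $\chi$ a cut-off near the causal cone, form a bounded subset of $\mathcal S$. What your route buys is that the step the paper leaves as ``one can show'' becomes automatic. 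The price is the explicit transform with its branch analysis, and reliance on temperedness and homogeneity specific to Riesz distributions. The symmetry argument, by contrast, is computation-free and applies to $\sigma\gamma^*f$ for any admissible $f$.

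Two minor corrections. First, the cancellation needs \emph{equal} normalisations of $\F R_+(\alpha)$ and $\F R_-(\alpha)$, not ``the same up to a constant''. This holds because $R_-(\alpha)=(-\mathrm{id})^*R_+(\alpha)$ and $\eta^{-1}(\zeta,\zeta)$ is even, so $\F R_-(\alpha)(\xi)=\F R_+(\alpha)(-\xi)$. Second, the obstacle you anticipate is misplaced. On the Fourier side the boundary values are most singular for large $\Re\alpha$, along the null cone and at $\xi=0$, so they are not functions there. This never matters: you only pair with $\psi$ supported in the open spacelike region, where the tube function extends real-analytically and the boundary values converge locally uniformly for every $\alpha$. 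Analytic continuation is then needed only for the scalar functions $\alpha\mapsto\langle\F R(\alpha),\psi\rangle$.
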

\begin{rem}
	A more precise characterization of the wavefront at zero can be obtained but will not be needed here.
\end{rem}
\begin{proof}[proof sketch]\hfill
	\begin{itemize}
	\item On Minkowski space, define a signed cut-off $\sigma$ to be $1$ on future causal vectors, $-1$ on past causal vectors and $0$ on spacelike vectors. For a function $f$ on $\R$, we can consider
	\[\sigma\gamma^*(f)=\sigma\cdot (f\circ \gamma).\]
	For $f(x)=c_\alpha x^\alpha$ we obtain the riesz distributions in this way.
	\item The wavefront set away from zero can be obtained directly via wavefront calculus, as $\Gamma$ is a submersion there. It thus remains to control the wavefront at $0$.
	\item Let $\xi$ be a spacelike vector (we want to show that this is not in the wavefront of $\sigma\gamma^*f$). Define
	\[O_\xi(x):=2\<x,\xi\>y_\xi-x\]
	(for this proof $\<,\>$ denotes the Riemannian scalar product on $\R^d$ rather than the Minkowski product, as the former is used to define the fourier transform).
	One can check the following identities:
	\begin{align*}
		\<O_\xi(x),\xi\>&=\<x,\xi\>\\
		\gamma(O_\xi(x))&=\gamma(x)\\
		|\det(O_\xi)|&=1\\
		\sigma(O_\xi(x))&=-\sigma(x)\\
	\end{align*}
	\item Choose a cut-off $\psi_\xi$ that is also preserved by $O_\xi$. First doing a substitution and then using that $O_\xi$ preserves everything except timeorientation, we obtain for any $\lambda\in \R$:
	\begin{align*}
		\F(\psi_\xi\sigma\gamma^*(f))(\lambda\xi)&=\int\limits_{\R^d}e^{-i\lambda\<x,\xi\>}\psi_\xi(x)\sigma(x)f(\gamma(x))dx\\
		&=\int\limits_{\R^d}|det(O_\xi)|e^{-i\lambda\<O_\xi(x),\xi\>}\psi_\xi(O_\xi(x))\sigma(O_\xi(x))f(\gamma(O_\xi(x)))dx\\
		&=-\int\limits_{\R^d}e^{-i\lambda\<x,\xi\>}\psi_\xi(x)\sigma(x)f(\gamma(x))dx\\
		&=-\F(\psi_\xi\sigma\gamma^*(f))(\lambda\xi)\\
		&=0
	\end{align*}
	In particular, the fourier transform of $\psi_\xi \sigma\gamma^*f$ is rapidly decaying on multiples of $\xi$ (being constantly zero).
	\item One can show that the rapid decay is preserved under a change in the cut-off function. Thus spacelike vectors are not contained in the wavefront set of $\sigma\gamma^*f$.
	\item Keeping track of wavefront norms along the way, one can show that in fact the map $\sigma\gamma^*$ maps distributions on $\R$ with wavefront at zero continuously to distributions on $\R^d$ with non-spacelike wavefront.
	\item The Riesz distributions are the image under this map of the family of distributions that is defined by extending $c_\alpha \mathbbm1_{x>0}x^\alpha$ holomorphically in $\alpha$. As continuous linear maps preserve holomorphicity, the Riesz distributions are holomorphic in the target space.
	\end{itemize}
\end{proof}
Combining this result with the pullback theorem from wavefront calculus, we obtain:
\begin{cor}
	For any timelike curve $w$ in $\R^d$, $w^*(R(\alpha))$ is well defined and holomorphic in $\alpha$ (as distributions).
\end{cor}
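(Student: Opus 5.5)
The corollary follows by combining Theorem \ref{WFRiesz} with the pullback theorem of wavefront calculus (\cite[Proposition 5.1]{BDH}). I will treat well-definedness first and holomorphicity second.

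\textbf{Well-definedness.} Let $w:I\to\R^d$ be a timelike curve. Its image is locally an embedded one-dimensional submanifold, since a timelike curve is an immersion, so localising in the parameter is harmless. The conormal space at $w(t)$ consists of covectors annihilating $\dot w(t)$. After identifying covectors with vectors via the Minkowski metric, these are exactly the vectors orthogonal to a timelike vector, hence spacelike. So it suffices to check that the closed conic set
\[\Lambda:=\{(y,\xi)\,:\,\xi \text{ non-spacelike, and } \xi\in\R\,d\gamma(y) \text{ if } y\neq 0,\ \gamma(y)=0\}\cup\{(y,\xi): y \text{ in the support region }\}\]
described in Theorem \ref{WFRiesz} contains no conormal covector of $w$. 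At any point, a nonzero covector cannot be both spacelike and non-spacelike. In particular, at $y=0$ the theorem excludes all spacelike covectors, which is exactly where the difference $R=R_+-R_-$ is needed. Away from $0$, the allowed directions are even more restricted, so they are a fortiori disjoint from the conormal. Hence $N^*w\cap\Lambda=\emptyset$.

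\textbf{Holomorphicity.} Since $\Lambda$ is closed and conic, \cite[Proposition 5.1]{BDH} gives a continuous extension of $w^*$ to the space $\mathcal D'_\Lambda(\R^d)$, with its natural seminorm topology, mapping into $\mathcal D'(I)$. By Theorem \ref{WFRiesz}, the map $\alpha\mapsto R(\alpha)$ is holomorphic as a map into $\mathcal D'_\Lambda$. Composing a holomorphic map with a continuous linear map gives a holomorphic map. Concretely, the complex difference quotients converge in $\mathcal D'_\Lambda$, and continuity and linearity of $w^*$ carry this convergence over. Thus $\alpha\mapsto w^*(R(\alpha))$ is holomorphic in $\mathcal D'(I)$, and in particular $\alpha\mapsto w^*(R(\alpha))[\phi]$ is entire for every test function $\phi$.

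\textbf{Main obstacle.} The only subtle point is to make sure that Theorem \ref{WFRiesz} indeed provides a single fixed closed cone $\Lambda$ that works uniformly for all $\alpha$, with holomorphicity taken in the Hörmander topology of $\mathcal D'_\Lambda$ rather than merely in $\mathcal D'$. Weak holomorphicity alone would not transfer through a map that is only continuous on $\mathcal D'_\Lambda$. I would take this uniformity directly from the last two points of the proof sketch of Theorem \ref{WFRiesz}: there $\sigma\gamma^*$ is continuous into distributions with non-spacelike wavefront, with a fixed cone. I would then check that adding the cone $\R\,d\gamma$ on the light cone, a closed set away from $0$, keeps $\Lambda$ closed.
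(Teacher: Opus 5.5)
Your proposal is correct and follows the paper's own route. The paper gets the corollary in one line by combining Theorem \ref{WFRiesz} with the continuity of the pullback from \cite[Proposition 5.1]{BDH}: Theorem \ref{WFRiesz} gives one fixed closed cone of non-spacelike covectors and holomorphicity in the corresponding H\"ormander-type topology, the conormal of a timelike curve is spacelike, and a continuous linear map preserves holomorphicity.

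One slip needs fixing: drop the second set $\{(y,\xi): y \text{ in the support region}\}$ from your definition of $\Lambda$. Taken literally it contains every covector over $J(0)$, including the conormals of $w$ at $w(t)$ for $t\neq 0$. The cone from Theorem \ref{WFRiesz} is just your first set, which is all your disjointness argument actually uses.
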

By diffeomorphism invariance, we get the same thing for $R(\alpha,x)$, provided that the image of the path $w$ under normal coordinates around $x$ is timelike. If $w$ is a timelike curve through $x$, this will at least be true locally around $x$. Away from $x$, the path will be in the timelike future or past of $x$, where the Riesz distributions are smooth anyways. Thus we obtain
\begin{cor}
	\label{Rieszpullback}
	For any timelike curve $w$ in $U$, $w^*(R(\alpha,w(0)))$ is well defined and holomorphic in $\alpha$ (as distributions).
\end{cor}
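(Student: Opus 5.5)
The plan is to reduce Corollary \ref{Rieszpullback} to the Minkowski statement of the previous corollary, using normal coordinates around $x:=w(0)$. The argument is local, since being well defined and holomorphic as a distribution on the parameter interval of $w$ is a local property. It therefore suffices to cover the parameter domain of $w$ by open intervals $I_j$ and show the claim for $w|_{I_j}$. On overlaps the restrictions agree, because pullback is compatible with restriction to open subsets. Holomorphicity of a distribution-valued family can be tested against test functions, and using a partition of unity we may take these supported in a single $I_j$.

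\emph{Near $t=0$.} By Definition \ref{dfGamma}, $R(\alpha,x)=(exp_x)^{-1*}R(\alpha)$, so that
\[w^*R(\alpha,x)=\big((exp_x)^{-1}\circ w\big)^*R(\alpha),\]
where $\tilde w:=(exp_x)^{-1}\circ w$ is a curve in $T_xU\cong\R^d$ with $\tilde w(0)=0$. This identity holds for $\Re(\alpha)$ large, where everything is a continuous function. It extends to all $\alpha$ because pullback by a diffeomorphism is continuous and preserves wavefront conditions. Since $d(exp_x)_0=\mathrm{id}$, we have $\tilde w'(0)=w'(0)$, which is timelike for $\eta$. By continuity, $\tilde w$ is timelike with respect to the constant Minkowski metric $\eta$ on some interval $(-\delta,\delta)$. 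The previous corollary then gives that $\tilde w|_{(-\delta,\delta)}^*R(\alpha)$ is well defined and holomorphic. One point needs care: the previous corollary needs the wavefront of $R(\alpha)$ to avoid the conormal of $\tilde w$, and this is exactly the content of Theorem \ref{WFRiesz}. The conormal of an $\eta$-timelike curve consists of $\eta$-spacelike covectors, and Theorem \ref{WFRiesz} excludes these from the wavefront set, uniformly in $\alpha$ as a family in the topologized space.

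\emph{Away from $t=0$.} For $t\neq 0$ with $|t|$ small, $w(t)$ lies in $I^\pm(x)$. Indeed, $\tilde w(t)=t\,w'(0)+O(t^2)$ is $\eta$-timelike, so $\Gamma_x(w(t))>0$, and the sign of $t$ fixes the time orientation. Globally, a timelike curve leaving $x$ stays in $I^+(x)$ for $t>0$ and in $I^-(x)$ for $t<0$. Since $U$ is causally compatible and convex, this is also expressed by $\Gamma_x>0$ along $w$, with the correct sign of $\sigma$. On the open set $I^\pm(x)$, $R(\alpha,x)=\pm c_\alpha\Gamma_x^{(\alpha-d)/2}$ is smooth in $y$ and entire in $\alpha$: $c_\alpha$ is entire, being a product of reciprocal Gamma functions, and $\Gamma_x^{(\alpha-d)/2}=\exp\big(\tfrac{\alpha-d}{2}\log\Gamma_x\big)$ is entire in $\alpha$ with all $y$-derivatives locally uniform. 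Hence $w^*R(\alpha,x)=\pm c_\alpha(\Gamma_x\circ w)^{(\alpha-d)/2}$ on $\{t\neq0\}$ is a smooth function, holomorphic in $\alpha$ locally uniformly, and thus holomorphic as a distribution there.

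Gluing the two pieces with a partition of unity $\{(-\delta,\delta),\R\setminus\{0\}\}$ gives the claim. I expect the main obstacle to be the first step, namely justifying that the pullback commutes with the coordinate change for all complex $\alpha$, not just for $\Re(\alpha)$ large. For this I would use continuity of pullbacks on wavefront-restricted spaces (\cite[Proposition 5.1]{BDH}), together with uniqueness of holomorphic extension.
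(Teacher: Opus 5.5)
Your proof is correct and follows the paper's own argument. Near $t=0$ you transfer the Minkowski corollary through normal coordinates, where $(\exp_x)^{-1}\circ w$ is $\eta$-timelike, and away from $t=0$ you use that $w$ lies in $I^\pm(x)$, where $R(\alpha,x)$ is smooth. Your added details (gluing with a partition of unity, entire dependence on $\alpha$ off the light cone, and extending the pullback chain rule from large $\Re(\alpha)$ via continuity on wavefront-restricted spaces) fill in what the paper's sketch leaves implicit.
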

Using the Hadamard expansion, this carries over to the Green's kernels (as we can always pull back a continuous remainder term).

\section{Constructing an action-like function}
\label{caf}
We now seek to integrate Riesz distributions and Green's kernels along a timelike curve $w$, paired with a test function $f$, which we rescale in order to study the behavior around $0$.
We start by fixing some notation:
\begin{gn}
	\label{dfw}
	For the remainder of this section, let $I\subseteq\R$ be an open interval containing $0$ and $x\in U$. Furthermore, let $w\colon I\rightarrow U$ be a future oriented timelike curve with $w(0)=x$. In section \ref{Hadamextract}, $w$ will additionally be assumed to be a timelike unit speed geodesic, but for now we keep it more general. Define 
	\[\nu_w(t):=\frac{\Gamma_x(w(t))}{t^2}.\]
\end{gn}
Note that $\nu_w(t)$ extends smoothly to $0$, as $\Gamma_x$ vanishes of order $2$ at $x$. We have
\[\nu_w(0)=\gamma(w'(0)),\]
as can be checked in normal coordinates around $x$.
Before we proceed in a more rigorous manner, let us do an informal calculation to motivate what we do. Assume that $k$ is large enough such that $R(2k+2,x)$ is an actual function. Assume furthermore that $w$ is a unit speed geodesic, so that $\nu_w=1$. We taylor expand 
\[V^k_x(w(t))\sim\insum{l}v_l t^l.\]
We pair the pullback along $w$ with a test function $f$ supported in $I$ that we scale down in order to analyze the behavior near the diagonal. This gives us (at least formally)
\begin{align*}
	w^*(V^k_xR(2k+2,x))[f(\tfrac{\cdot}{s})]&=\int\limits_I V^k_x(w(t))R(2k+2,x)(w(t))f(\tfrac{t}{s})dt\\
	&\sim \insum{l}\int\limits_I v_lt^l c_{2k+2} \sign(t)|t|^{2k+2-d}f(\tfrac{t}{s})dt\\
	&=\insum{l}v_l c_{2k+2} \int\limits_\R t^l  \sign(t)|t|^{2k+2-d}f(\tfrac{t}{s})dt\\
	&=\insum{l}v_l c_{2k+2}\int\limits_\R s(st)^l  \sign(st)|st|^{2k+2-d}f(t)dt\\
	&=\insum{l}v_l c_{2k+2}s^{l+2k+3-d} \int\limits_\R t^l  \sign(t)|t|^{2k+2-d}f(t)dt\\
\end{align*}
Summing this over $k$, we obtain a small $s$ asymptotic expansion for $w^*(G_x)[f(\tfrac{\cdot}{s})]$ in terms of the Taylor coefficients (i.e. the derivatives in time direction) of the Hadamard coefficients. Note that the remaining integral depends neither on $s$ nor on the geometry of the manifold.
If we assume that $f$ is odd, the integral will vanish for all odd $l$ and for even $l$ we obtain
\[\int\limits_\R t^l  \sign(t)|t|^{2k+2-d}f(t)dt=2\intP f(t)t^{l+2k+2-d}dt.\]
This is (one value of) the mellin transform of $f$. To handle smaller values of $k$, we need a variant of the Mellin transform where some poles are removed:
\begin{df}
	\label{dfMprime}
	In the following we abbreviate:
	\[\M'(g):=\frac{\M(g)}{\Gamma(\frac{\cdot+1}{2})},\]
	where $\M(g)$ denotes the Mellin transform of $g$, i.e. the meromorphic continuation of 
	$\alpha \mapsto \intP g(x)x^{\alpha-1}dx$ and $\Gamma$ denotes the gamma function.
\end{df}
For odd smooth $g$, the Gamma function in the denominator will cancel with poles of the Mellin transform, allowing us to evaluate the modified Mellin transform everywhere. 
As we have seen above, the distinction between even and odd functions will also play a role.
\begin{df}
	\label{evenodd}
	For a function $f$ on $\R$, let 
	\[(f)_{odd}(t):=\frac{1}{2}(f(t)-f(-t))\]
	and
	\[(f)_{even}(t):=\frac{1}{2}(f(t)+f(-t))\]
	denote the odd and even part of $f$. If $f$ is defined only on a subset of $\R$, extend it by $0$.
\end{df}
We now want to make the above considerations more rigorous.
We start by evaluating the pull-back of a single summand $V_x^kR(2k+2,x)$. 
\begin{prop}
	\label{WRint}
	For any $g\in C_c^\infty(I)$ and $\alpha\in \C$, the pullback $w^*({V^k_x}R(\alpha,x))$ is well-defined and we have:
	\[w^*({V^k_x}R(\alpha,x))[g]=\frac{2^{2-\alpha}\pi^\frac{2-d}{2}}{\Gamma(\frac{\alpha}{2})}\M'\left(\left(\nu_w^\frac{\alpha-d}{2}({V^k_x}\circ w)g\right)_{odd}\right)({\alpha-d+1}).\]
\end{prop}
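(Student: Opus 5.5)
The plan is to prove the identity first for $\Re(\alpha)$ large, where everything is an honest function, and then to extend it to all $\alpha\in\C$ by holomorphic continuation.

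\textbf{Well-definedness.} By Corollary \ref{Rieszpullback}, $w^*(R(\alpha,x))$ is a well-defined distribution on $I$, holomorphic in $\alpha$. Since $V^k_x$ is smooth, the pullback of the product is $w^*(V^k_xR(\alpha,x))=(V^k_x\circ w)\,w^*(R(\alpha,x))$; this follows from the continuity of the pullback on the relevant wavefront space. Hence the left hand side, paired with $g$, is an entire function of $\alpha$.

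\textbf{Large $\Re(\alpha)$.} Take $\Re(\alpha)>d$. Since $w$ is future timelike with $w(0)=x$ and $U$ is causally compatible and convex, $w(t)\in I_+(x)$ for $t>0$ and $w(t)\in I_-(x)$ for $t<0$. So $\Gamma_x(w(t))=\nu_w(t)t^2$ with $\nu_w>0$ on $I$ (shrinking $I$ if necessary; $\nu_w(0)=\gamma(w'(0))>0$), and
\[R(\alpha,x)(w(t))=c_\alpha\,\sign(t)\,\nu_w(t)^{\frac{\alpha-d}{2}}|t|^{\alpha-d}.\]
Put $h:=\nu_w^{\frac{\alpha-d}{2}}(V^k_x\circ w)g$. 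Then
\[\int_I \sign(t)|t|^{\alpha-d}h(t)\,dt=2\intP h_{odd}(t)\,t^{\alpha-d}\,dt=2\,\M(h_{odd})(\alpha-d+1).\]
Multiplying by $c_\alpha$ gives
\[2c_\alpha=\frac{2^{2-\alpha}\pi^{\frac{2-d}{2}}}{\Gamma(\frac{\alpha}{2})\,\Gamma(\frac{\alpha-d+2}{2})},\]
and $\Gamma(\frac{\alpha-d+2}{2})=\Gamma(\frac{(\alpha-d+1)+1}{2})$ is exactly the factor that turns $\M$ into $\M'$ at $\alpha-d+1$. This yields the stated formula on $\Re(\alpha)>d$.

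\textbf{Continuation.} It remains to show that the right hand side is entire in $\alpha$; the identity theorem then gives equality for all $\alpha$.

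The factor $1/\Gamma(\alpha/2)$ is entire. The main point is that $\alpha\mapsto\M'(h_{\alpha,odd})(\alpha-d+1)$ is entire, where $h_\alpha$ depends on $\alpha$ through $\nu_w^{\frac{\alpha-d}{2}}=e^{\frac{\alpha-d}{2}\log\nu_w}$. This makes $\alpha\mapsto h_\alpha$ a holomorphic map into $C_c^\infty(I)$.

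For an odd $\phi\in C_c^\infty(\R)$, integrate by parts, or Taylor expand $\phi$ at $0$ up to order $N$. This shows that $\M(\phi)(\beta)$ continues meromorphically with at most simple poles at $\beta=-1,-3,-5,\dots$, because only odd Taylor coefficients occur. These poles are exactly the poles of $\Gamma(\frac{\beta+1}{2})$, so $\M'(\phi)$ is entire.

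\textbf{Main obstacle.} The step I expect to be hardest is making this holomorphic jointly in $\alpha$ while $\phi=h_{\alpha,odd}$ varies with $\alpha$. I would write
\[\M(\phi)(\beta)=\int_0^1\Big(\phi(t)-\sum_{j\le N}\phi_j t^j\Big)t^{\beta-1}dt+\sum_{j\le N}\frac{\phi_j}{\beta+j}+\int_1^\infty\phi(t)\,t^{\beta-1}dt,\]
where the $\phi_j$ are the Taylor coefficients of $\phi$ at $0$ and $\phi_j=0$ for even $j$. Each piece is holomorphic in $(\alpha,\beta)$ on $\Re(\beta)>-N-1$ away from the explicit poles. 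Dividing by $\Gamma(\frac{\beta+1}{2})$ removes those poles, and then setting $\beta=\alpha-d+1$ gives an entire function of $\alpha$.
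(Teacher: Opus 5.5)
Your proposal is correct and follows the paper's proof. You compute the pairing for $\Re(\alpha)>d$, where $R(\alpha,x)\circ w=c_\alpha\,\sign(t)\,\nu_w^{\frac{\alpha-d}{2}}|t|^{\alpha-d}$, recognise $2c_\alpha\M(\cdot)(\alpha-d+1)$ as the stated $\M'$ expression, and extend to all $\alpha$ by analytic continuation. Your argument that the right-hand side is entire (odd functions only give Mellin poles at $-1,-3,\dots$, which $\Gamma(\frac{\beta+1}{2})$ cancels) fills in what the paper compresses into ``both sides are meromorphic''. Your remark about shrinking $I$ is unnecessary, since $\nu_w>0$ already holds on all of $I$.
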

\begin{rem}
	Read this as
	\[\intR R(\alpha,x)(w(t)){V^k_x}(w(t))g(t)dt=\intP c_\alpha \nu_w^\frac{\alpha-d}{2} t^{\alpha-d} ({V^k_x}(w(t))g(t))_{odd} dt.\]
\end{rem}
\begin{proof}
	Well-definedness of the pull-back follows from Corollary \ref{Rieszpullback}.
	For $\Re(\alpha)>d$ (where Riesz distributions are functions), we have
	\[R(\alpha,x)(w(t))=c_\alpha \Gamma_x(w(t))^\frac{\alpha-d}{2}(\mathbbm{1}_{w(t)\in J_+(x)}-\mathbbm{1}_{w(t)\in J_-(x)})=c_\alpha \nu^\frac{\alpha-d}{2} |t|^{\alpha-d}\sign(t).\]
	Thus we get
	\begin{align*}
		&w^*({V^k_x}R(\alpha,x))[g]\\
		&=\int\limits_I {V^k_x}(w(t))R(\alpha,x)(w(t))g(t)dt\\
		&=c_{\alpha}\int\limits_I \sign(t){V^k_x}(w(t))\nu_w(t)^\frac{\alpha-d}{2}|t|^{\alpha-d}g(t)dt\\
		&=2c_\alpha\intP\left(\nu_w^\frac{\alpha-d}{2}({V^k_x}\circ w)g\right)_{odd}(t) t^{\alpha-d}dt\\
		&=2c_\alpha \M\left(\left(\nu_w^\frac{\alpha-d}{2}({V^k_x}\circ w)g\right)_{odd}\right)({\alpha-d+1})\\
		&=\frac{2^{2-\alpha}\pi^\frac{2-d}{2}}{\Gamma(\frac{\alpha}{2})} \M'\left(\left(\nu_w^\frac{\alpha-d}{2}({V^k_x}\circ w)g\right)_{odd}\right)({\alpha-d+1}).
	\end{align*}
	As both sides are meromorphic in $\alpha$, they coincide for all $\alpha$.
\end{proof}
In order to retain more information after distributional evaluation, we equip the function we evaluate at with an additional parameter $s$.
\begin{gn}
	\label{dff}
	Let $f\in C_c^\infty(\R)$ be an odd function. For $s>0$ and $t\in \R$, define \[f_s(t):=f\left(\frac{t}{s}\right).\] 
\end{gn}
This $f$ corresponds roughly to (the derivative of the Fourier transform of) the cutoff function in the spectral action, $s$ corresponding to the cutoff parameter there. The $f$ here being odd corresponds to that in the spectral action being even.
The assumption that $f$ is odd is not crucial for the following considerations, but it makes things somewhat simpler and adding an even part would not yield anything helpful for our purposes.

Testing a distribution against $f_s$ for small $s$ corresponds to investigating its behaviour around $0$. Multiplying with $f_s$ and taking the Mellin transform does something similar. This is investigated in the following two lemmas.
\begin{lemma}
	\label{IntRem}
	If $h$ is a $C^{k+1}$ function whose first $k$ derivatives at $0$ vanish and $\alpha\in \C$ with $\Re(\alpha)\geq -k$, we have
	\[|\M(hf_s)(\alpha)|=O(s^{k+\Re(\alpha)})\]
	and
	\[\intR h(t)f_s(t)dt=O(s^{k+1}).\]
\end{lemma}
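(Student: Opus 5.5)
The plan is to use Taylor's theorem with integral remainder and a change of variables $t\mapsto st$. Since $h\in C^{k+1}$ has $h^{(j)}(0)=0$ for $j\le k$, we can write
\[h(t)=t^{k+1}r(t),\qquad r(t)=\frac{1}{k!}\int_0^1(1-u)^k h^{(k+1)}(ut)\,du.\]
Here $r$ is continuous, and it is bounded on compact sets; in particular it is bounded on $s\,\supp(f)$ for $s\le 1$, uniformly in $s$. Since $f$ has compact support in some $[-A,A]$, only values of $r$ on $[-As,As]$ enter, and there $|r|\le C$ for $s\le1$. We only care about small $s$, so we take $s\le 1$ throughout.

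For the integral statement, substitute $t=s\tau$:
\[\intR h(t)f_s(t)\,dt=s\intR (s\tau)^{k+1}r(s\tau)f(\tau)\,d\tau .\]
Its absolute value is at most $s^{k+2}C\intR|\tau|^{k+1}|f(\tau)|\,d\tau$. This is $O(s^{k+2})$, which in particular gives the claimed $O(s^{k+1})$.

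For the Mellin statement, assume first that $\Re(\alpha)>-(k+1)$, so that the defining integral converges at $0$. The function $hf_s$ vanishes to order $k+1$ at $0$, so $t^{\alpha-1}h(t)f_s(t)=O(t^{k+\Re\alpha})$ is integrable near $0$. It also has compact support, so no continuation is needed and $\M(hf_s)(\alpha)$ is given by the absolutely convergent integral $\intP h(t)f(t/s)t^{\alpha-1}\,dt$. Substituting $t=s\tau$ gives
\[\M(hf_s)(\alpha)=s^{\alpha}\intP (s\tau)^{k+1}r(s\tau)f(\tau)\tau^{\alpha-1}\,d\tau ,\]
hence
\[|\M(hf_s)(\alpha)|\le s^{k+1+\Re\alpha}\,C\intP \tau^{k+\Re\alpha}|f(\tau)|\,d\tau .\]
The last integral is finite because $k+\Re\alpha>-1$ and $f$ is bounded with compact support. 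This gives $O(s^{k+1+\Re\alpha})\subseteq O(s^{k+\Re\alpha})$.

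The main obstacle is the boundary case $\Re\alpha=-k$, and in particular $\alpha=-k$ exactly, where I would need to check that no continuation issue arises. If $\Re\alpha=-k$, then $\Re\alpha>-(k+1)$, so the same estimate applies and the integrand is $O(\tau^{0})$ near $0$. So the hypothesis $\Re(\alpha)\ge -k$ sits strictly inside the convergence region $\Re\alpha>-(k+1)$, and the argument above covers all cases. The only care needed is the uniform boundedness of the remainder $r$ on the shrinking support, which the integral form of Taylor's remainder provides. The stated exponents are thus slightly weaker than what the argument yields, so they follow.
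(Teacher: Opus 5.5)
You follow the paper's route: a Taylor factorization with a remainder bounded uniformly on the shrinking support, then the substitution $t=s\tau$. The difference is that you factor out $t^{k+1}$, i.e.\ you assume $h(0)=h'(0)=\dots=h^{(k)}(0)=0$. Read that way your argument is correct. However, the paper's proof only writes $h(t)=E(t)t^k$ with $E$ bounded, so it uses vanishing of orders $0,\dots,k-1$. The exponents in the statement are exactly the sharp ones for that hypothesis. For $h(t)=t^k$ one has $\M(hf_s)(\alpha)=s^{k+\alpha}\M(f)(k+\alpha)$, and for odd $k$ also $\intR hf_s\,dt=2s^{k+1}\M(f)(k+1)$. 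For generic $f$ these are not $O(s^{k+1+\Re\alpha})$ and $O(s^{k+2})$ respectively. Your remark that the argument yields one power of $s$ more than claimed is the symptom of assuming one vanishing derivative too many. With the hypothesis as the paper uses it, your proof does not cover $h$ with $h^{(k)}(0)\neq 0$.

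Re-indexing your argument ($k\to k-1$, which only needs $h\in C^k$) repairs the range $\Re\alpha>-k$. It does not repair the boundary case $\Re\alpha=-k$ that you singled out. There the integrand near $0$ is $t^{-1+i\Im\alpha}E(t)f(t/s)$, with $E(0)=h^{(k)}(0)/k!$ possibly nonzero, so the vanishing of $h$ alone does not give integrability. The missing ingredient is that $f$ is odd, hence $f(0)=0$ and $|f(\tau)|\le C'|\tau|$. The substitution then gives
\[|\M(hf_s)(\alpha)|\le C s^{k+\Re\alpha}\intP\tau^{k+\Re\alpha-1}|f(\tau)|\,d\tau,\]
and the integral is finite for every $\Re\alpha\ge-k$. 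This is precisely the paper's estimate. The second claim then follows from the same substitution, or, as in the paper, from $\intR hf_s\,dt=\M((h-h(-\cdot))f_s)(1)$.
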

\begin{rem}
	This translates error terms in taylor series or asymptotic expansions in differentiability orders (if all continuous derivatives vanish at 0) to error terms for asymptotic expansions in $s$. 
\end{rem}
\begin{proof}
	By Taylor's theorem (with vanishing Taylor series),
	\[h(t)= E(t)t^k\]
	for some function $E$ that is bounded by a constant $C$ on a compact set containing $\supp(f_s)$ for any $|s|<1$. Thus we have for $|s|<1$:
	\begin{align*}
		| \M (hf_s)(\alpha)|&=|\M (Ef_s)(\alpha+k)|\\
		&\leq\intP C|t^{k+\alpha-1}f\left(\tfrac{t}{s}\right)|dt\\
		&=|s^{k+\alpha}|\intP\frac{1}{s} C|\left(\tfrac{t}{s}\right)^{k+\alpha-1}f\left(\tfrac{t}{s}\right)|dt\\
		&=C|s^{k+\alpha}|\intP |t^{k+\alpha-1}f(t)|dt\\
		&=O(s^{k+\Re(\alpha)}).
	\end{align*}
	
	The second part of the claim follows either by doing an analogous calculation or by observing that
	\[\intR h(t)f_s(t)dt=\intP(h(t)-h(-t))f_s(t)dt= \M ((h-h(-\cdot))f_s)(1).\qedhere\]
\end{proof}
\begin{lemma}
	\label{MsExp}
	Let $h$ be a smooth function defined on $I$ and $\alpha\in \C$. Then
	\[\M'( (hf_s)_{odd})(\alpha)\stackrel{s\rightarrow 0}{\sim}\insum{k}\frac{k!}{(2k)!}\binom{\frac{\alpha+2k-1}{2}}{k}h^{(2k)}(0) \M'(f)(\alpha+2k)s^{\alpha+2k}.\]
\end{lemma}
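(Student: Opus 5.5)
Since $f$ is odd, $f_s$ is odd as well, so $(hf_s)_{odd}=h_{even}f_s$ with $h_{even}=(h)_{even}$. The plan is to Taylor expand $h_{even}$ at $0$, use the scaling behaviour of the Mellin transform on each monomial term, and control the remainder with Lemma \ref{IntRem}.

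\textbf{Step 1: Taylor expansion.} Only even derivatives survive in $h_{even}$, and $h_{even}^{(2k)}(0)=h^{(2k)}(0)$. For fixed $N$ we therefore write
\[h_{even}(t)=\sum_{k=0}^{N}\frac{h^{(2k)}(0)}{(2k)!}t^{2k}+E_N(t),\]
where $E_N$ is smooth and even, and its first $2N+1$ derivatives vanish at $0$. Since $f_s$ is supported in $s\,\supp f\subset I$ for small $s$, we may regard $h$ as defined on all of $\R$ after a harmless cutoff.

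\textbf{Step 2: the monomial terms.} For $\Re(\alpha)$ large, substituting $t\mapsto st$ in the defining integral gives
\[\M(t^{2k}f_s)(\alpha)=\M(f_s)(\alpha+2k)=s^{\alpha+2k}\M(f)(\alpha+2k).\]
By meromorphic continuation this identity holds for all $\alpha$. Converting to $\M'$ introduces the factor
\[\frac{\Gamma\bigl(\tfrac{\alpha+2k+1}{2}\bigr)}{\Gamma\bigl(\tfrac{\alpha+1}{2}\bigr)}=(a)_k=k!\binom{a+k-1}{k},\qquad a:=\tfrac{\alpha+1}{2},\]
and $a+k-1=\tfrac{\alpha+2k-1}{2}$. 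Combining this with the Taylor coefficient $h^{(2k)}(0)/(2k)!$ gives exactly the $k$-th summand of the claimed expansion. This identity is polynomial in $\alpha$, so it holds everywhere.

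\textbf{Step 3: pole structure.} For odd smooth $g$ the Mellin transform $\M(g)$ has poles only at $\alpha=-n$ with $n$ odd. These are exactly the poles of $\Gamma(\tfrac{\cdot+1}{2})$, so all $\M'$-values appearing are entire in $\alpha$. In particular $\M'(f)(\alpha+2k)$ is defined for every $\alpha$, and no case distinction is needed in Step 2.

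\textbf{Step 4: the remainder (main obstacle).} We must show
\[\M'(E_Nf_s)(\alpha)=O\bigl(s^{2N+\Re\alpha}\bigr)\]
with the exponent tending to infinity as $N\to\infty$. For $\Re(\alpha)\ge -2N$, Lemma \ref{IntRem} applied with $k=2N$ gives $|\M(E_Nf_s)(\alpha)|=O(s^{2N+\Re\alpha})$. Here the Mellin integral converges absolutely because $E_N f_s$ vanishes to order $2N$, so no continuation is needed. Dividing by $\Gamma(\tfrac{\alpha+1}{2})$ is harmless, since $1/\Gamma$ is entire and $\alpha$ is fixed.

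The only delicate point is that $\Gamma(\tfrac{\alpha+1}{2})$ may have a pole at the given $\alpha$. In that case $\M'$ of the remainder is just $0$ (the Mellin integral is finite there), which is still consistent with the bound. Alternatively, at such $\alpha$ one argues by continuity in $\alpha$, using that the bound in Lemma \ref{IntRem} is locally uniform in $\alpha$. Letting $N\to\infty$ then yields the asymptotic expansion.
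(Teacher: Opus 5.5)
Your proposal is correct and follows essentially the same route as the paper. Both arguments reduce to $(h)_{even}f_s$ and Taylor expand it. Both use the scaling $\M(t^{2k}f_s)(\alpha)=s^{\alpha+2k}\M(f)(\alpha+2k)$ together with the identity $\Gamma(\tfrac{\alpha+1}{2}+k)/\Gamma(\tfrac{\alpha+1}{2})=k!\binom{\frac{\alpha+2k-1}{2}}{k}$, and both bound the remainder with Lemma \ref{IntRem}. Your additional remarks make explicit two points the paper leaves implicit: that $\M'$ is entire for odd functions, and that the remainder term still behaves at poles of $\Gamma(\tfrac{\alpha+1}{2})$.
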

\begin{proof}	
	Write $t^k$ for the function $t\mapsto t^k$. For $k\in \N$ and $\Re(\alpha)\geq 0$, we have 
	\begin{align*}
		\M' (t^k f_s)(\alpha)&=\frac{1}{\Gamma(\frac{\alpha+1}{2})}\intP t^{\alpha+k-1}f\left(\frac{t}{s}\right)dt\\
		&=\frac{1}{\Gamma(\frac{\alpha+1}{2})}s^{\alpha+k}\intP\frac{1}{s}\left( \frac{t}{s}\right)^{\alpha+k-1}f\left(\frac{t}{s}\right)dt\\
		&=\frac{1}{\Gamma(\frac{\alpha+1}{2})}s^{\alpha+k}\intP t^{\alpha+k-1}f(t)dt\\
		&= \frac{1}{\Gamma(\frac{\alpha+1}{2})}\M(f)(\alpha+k)s^{\alpha+k}.\\
	\end{align*}
	By analytic continuation, this holds for all $\alpha\in \C$. Now fix $\alpha\in \C$. As $f$ is odd, we have $(hf_s)_{odd}=(h)
	_{even}f_s$.
	We write $(h)_{even}$ in a Taylor expansion
	\[(h)_{even}(t)=\sum\limits_{k=0}^N\frac{1}{(2k)!}h^{(2k)}(0)t^{2k}+ E(t),\]
	where the first $2N$ derivatives of $E$ at $0$ vanish.
	We obtain for $N\in \N$ such that $2N\geq-\Re(\alpha)$ (using Lemma \ref{IntRem} for the remainder term) :
	\begin{align*}
		&\M'((hf_s))(\alpha)\\
		&=\sum\limits_{k=0}^N\frac{1}{(2k)!}h^{(2k)}(0)  \left(\frac{\Gamma(\frac{\cdot+2k+1}{2})\M (t^{2k}f_s)}{\Gamma(\frac{\cdot+2k+1}{2})\Gamma(\frac{\cdot+1}{2})}\right)(\alpha)
		+ \M' ( Ef_s)(\alpha)\\
		&=\sum\limits_{k=0}^N\frac{k!}{(2k)!}h^{(2k)}(0)  \left(\frac{\Gamma(\frac{\cdot+1}{2}+k)}{k!\Gamma(\frac{\cdot+1}{2})}\frac{\M (f_s)(\cdot+2k)}{\Gamma(\frac{\cdot+2k+1}{2})}\right)(\alpha)
		+ O(s^{2N+\Re(\alpha)})\\
		&=\sum\limits_{k=0}^N\frac{k!}{(2k)!}\binom{\frac{\alpha+2k-1}{2}}{k}h^{(2k)}(0) \M'(f)(\alpha+2k)s^{\alpha+2k}+ O(s^{2N+\Re(\alpha)}).\\
	\end{align*}
	As $N$ was arbitrary, this gives the desired result.
\end{proof}

Putting together what we have done, we obtain this sections main result:
\begin{thm}
	\label{intexp}
	\label{dfan}
	$w^*(G_x)$ is well-defined and we have
	\begin{align*}
		w^*(G_x)[f_s]\stackrel{s\rightarrow 0}{\sim}&\insum{k}\insum{n}a(k,n)\left(\nu_w^{k-\tfrac{d}{2}+1}V^{k}_x\circ w\right)^{(2n)}(0)s^{2k+2n+3-d}
	\end{align*}
	with 
	\[a(k,n):=a(k,n,f,d):=\frac{\pi^\frac{2-d}{2}n!}{4^kk!(2n)!} \binom{k+n+1-\frac{d}{2}}{n} \M'(f)({2k+2n+3-d})\]
	being coefficients independent of the geometry (except through the dimension).
\end{thm}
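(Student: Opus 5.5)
Our plan is to combine the Hadamard expansion of $G_x$ with Proposition \ref{WRint} and Lemma \ref{MsExp}. We treat the finitely many leading terms exactly and show that the remainder only contributes high powers of $s$.

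\textbf{Truncation.} Fix $N\in\N$ and write
\[G_x|_U=\sum_{k=0}^{N}V^k_xR(2k+2,x)+E_N,\]
where $E_N$ is of class $C^{m(N)}$ with $m(N)\to\infty$ as $N\to\infty$. By the Remark after Definition/Proposition \ref{dfV}, $E_N$ is supported in $J(x)$, so all its derivatives up to order $m(N)$ vanish at $x$. Hence $E_N\circ w$ is a $C^{m(N)}$ function on $I$ whose derivatives vanish at $0$.

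\textbf{Well-definedness.} Each summand $V^k_xR(2k+2,x)$ can be pulled back by Corollary \ref{Rieszpullback}, since multiplying by the smooth $V^k_x$ does not enlarge the wavefront set. The remainder $E_N$ is continuous, so its pullback exists trivially. This makes $w^*(G_x)$ well-defined, and independence of $N$ follows by comparing two truncations.

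\textbf{Remainder estimate.} Lemma \ref{IntRem} gives
\[w^*(E_N)[f_s]=\intR E_N(w(t))f_s(t)\,dt=O(s^{m(N)}).\]
This exceeds any fixed power of $s$ once $N$ is large enough.

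\textbf{Main terms.} For each $k\le N$, apply Proposition \ref{WRint} with $\alpha=2k+2$ and $g=f_s$. The exponent is $\frac{\alpha-d}{2}=k+1-\frac d2$, and this gives
\[w^*(V^k_xR(2k+2,x))[f_s]=\frac{2^{-2k}\pi^{\frac{2-d}{2}}}{k!}\,\M'\bigl((h_kf_s)_{odd}\bigr)(2k+3-d),\qquad h_k:=\nu_w^{k+1-\frac d2}\,(V^k_x\circ w).\]
Here $h_k$ is smooth on $I$ because $\nu_w>0$ near $0$: $\nu_w(0)=\gamma(w'(0))>0$ for timelike $w$, and we shrink $I$ if needed, which does not affect the asymptotics since $f_s$ concentrates at $0$.

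Now apply Lemma \ref{MsExp} with $h=h_k$ and $\alpha=2k+3-d$. The binomial becomes $\binom{\frac{2k+3-d+2n-1}{2}}{n}=\binom{k+n+1-\frac d2}{n}$, the power of $s$ is $s^{2k+2n+3-d}$, and the Mellin factor is $\M'(f)(2k+2n+3-d)$. Multiplying by $\frac{n!}{(2n)!}$ and the prefactor $\frac{\pi^{\frac{2-d}{2}}}{4^kk!}$ reproduces exactly the coefficient $a(k,n)$.

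\textbf{Assembly.} To get the expansion up to order $s^K$, we choose $N$ so large that $m(N)>K$ and $2N+3-d>K$. Then every term with $k>N$ lies beyond order $K$ anyway, and the truncated double sum agrees with the claimed series up to $O(s^K)$.

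\textbf{Main obstacle.} The delicate point is the justification of the remainder step. We need the remainder in the Hadamard expansion to be genuinely $C^{m(N)}$ near $x$ with all derivatives vanishing at $x$, so that its composition with $w$ satisfies the hypotheses of Lemma \ref{IntRem}. We also need that pulling back the sum equals the sum of the pullbacks. This uses the continuity of the pullback on the appropriate wavefront space (\cite[Proposition 5.1]{BDH}), together with the cited form of the expansion \cite[Theorem 3.4]{Ron}.
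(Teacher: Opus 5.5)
Your proposal is correct and follows the paper's proof essentially step for step. You truncate the Hadamard expansion and kill the $C^m$ remainder, which vanishes to high order at $x$ because it is supported in $J(x)$, via Lemma \ref{IntRem}; you then evaluate each summand with Proposition \ref{WRint} at $\alpha=2k+2$ followed by Lemma \ref{MsExp} at $\alpha=2k+3-d$, which yields exactly $a(k,n)$. Your extra remarks (positivity of $\nu_w$ so that $h_k$ is smooth, and choosing $N$ explicitly relative to the target order $s^K$) only make explicit what the paper leaves implicit.
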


\begin{proof}
	 Let $s\in (0,1)$ be small enough that $f_s$ is supported in $I$. Let $m\in \N$ be arbitrary and let $N\in\N$ be large enough such that
	\[F:=G_x-\sum\limits_{k=0}^N V^k_xR(2k+2,x)\in C^m(U).\]
	As both the sum and the remainder have well-defined pull-backs under $w$, the same is true for $G_x$. As both $G_x$ and all $R(2k+2,x)$ vanish outside of $J(x)$, the same is true for $F$ and thus also for its derivatives. As $x$ is in the boundary of $J(x)$, this means that all derivatives of order up to $m$ of $F$ vanish at $x$. Therefore the first $m$ derivatives of $w^*F=F\circ w$ at 0 vanish.
	By Lemma \ref{IntRem}, we can conclude that
	\[|w^*(F)[f_s]|=\Big|\intR F\circ w(t)f_s(t)dt\Big|=O(s^m).\]
	For each of the summands, we can apply Proposition \ref{WRint} and Lemma \ref{MsExp} to obtain for $k<N$ and $N'$ large enough:
	\begin{align*}
		&w^*(V^k_xR(2k+2,x))[f_s]\\
		&=\frac{\pi^\frac{2-d}{2}}{4^kk!} \M'\left(\left((\nu_w^{k-\tfrac{d}{2}+1}V^k_x\circ w)f_s\right)_{odd}\right)({2k+3-d})\\
		&= \sum\limits_{n=0}^{N'}\frac{\pi^\frac{2-d}{2}}{4^kk!}\frac{n!}{(2n)!}\binom{k+n+1-\frac{d}{2}}{n}\left(\nu_w^{k-\tfrac{d}{2}+1}V^k_x\circ w\right)^{(2n)}(0)\\
		&\ \ \cdot\M'(f)({2k+2n+3-d})s^{2k+2n+3-d}+O(s^m).\\
	\end{align*}
	We then have
	\begin{align*}
		w^*(G_x)[f_s]&=\sum\limits_{k=0}^Nw^*(V^k_x R(2k+2,x))[f_s]+w^*(F)[f_s]\\
		&=\sum\limits_{k=0}^N\sum\limits_{n=0}^{N'}\frac{\pi^\frac{2-d}{2}n!}{4^kk!(2n)!} \binom{k+n+1-\frac{d}{2}}{n}\left(\nu_w^{k-\tfrac{d}{2}+1}V^k_x \circ w\right)^{(2n)}(0)\\&\cdot \M'(f)({2k+2n+3-d})s^{2k+2n+3-d}+O(s^m).
	\end{align*}
	As $m$ was arbitrary, this concludes the proof.
\end{proof}

\begin{rem}
	Keeping track of the remainder terms in this section, one can show that this asymptotic expansion is sufficiently uniform that it still holds after integrating over compact sets (see \cite[Theorem 4.2.15]{Ron} for technical details).
\end{rem}

\section{Extracting Hadamard coefficients}
\label{Hadamextract}
We now seek to use the result of the previous chapter to obtain the Hadamard coefficients on the diagonal, i.e. $V^{k}_x(x)$. The problem with the asymptotic expansion we derived is that we cannot distinguish between the contribution coming from the $k$-th Hadamard coefficient and that from the $2j$-th derivative of the $(k-j)$-th coefficient. This problem does not arise from the choice of action-like function we construct, but is inherent in the asymptotic expansion we are using.

The problem is similar to that of determining the coefficients of a power series 
\[\insum{k}a_k(x)x^k\]
where the coefficients are themselves depending on $x$. Not even the values of the coefficients at $x=0$ (except for the first) are determined by this expansion, as we may always add some function $g$ to one of the coefficients and subtract $xg$ from the previous one without changing the sum.

This problem also occurs with the Hadamard expansion: As
\[\Gamma R(2k,x)=2k(2k-d+2)R(2k+2,x),\]
we may (for $k\notin \{0, \frac{d}{2}-1\}$) add some $g$ to $V^{k}_x$ and subtract $\frac{\Gamma_x g}{2k(2k-d+2)}$ from $V^{k+1}_x$ without changing the sum in the asymptotic expansion of $G_x$. Thus $V^{0}$ and $V^{\frac{d}{2}-1}$ (in case $d$ is even) are the only Hadamard coefficients whose value at the diagonal we may hope to obtain by using only this expansion (and those we actually do obtain from our procedure).

We thus need to include some extra information about the Hadamard coefficients, if we want to succeed. We do this  by looking at the Hadamard expansion of $P-z$ for variable $z\in \C$ and expressing the Hadamard coefficients of $P-z$ in terms of those of $P$. An alternative approach would be to use the asymptotic expansion for powers of $G^\pm$ instead. 
\begin{rem}
	For even dimension, the $k=\tfrac{d}{2}-1$-coefficient is exempt from this problem, as the factor we would divide by is $0$ in that case. In fact that coefficient can then be obtained directly either from the $s^1$ coefficient in the expansion of Theorem \ref{intexp} or more directly by evaluating the Green's kernel at $y\in J_+(x)$ and taking the limit $y\rightarrow x$. This can then be used to obtain a formula for the first $\tfrac{d}{2}-1$ coefficients by "shifting up" those coefficients to the $\tfrac{d}{2}-1$-slot, where we can read them off (see \cite[Section 5.1]{Ron}). Unfortunately, there is no way to "shift down" the coefficients, so for the higher order coefficients, the more general approach described in the following is required.
\end{rem}
The key insight we will use is the following:

\begin{prop}
	\label{Vz}
	We have
	\[V^{k}_{x}(z)=\sum\limits_{m=0}^k\binom{k}{m}z^mV^{k-m}_x.\]
\end{prop}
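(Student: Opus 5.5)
We use the uniqueness part of Definition/Proposition \ref{dfV}. Define
\[W^k_x:=\sum\limits_{m=0}^k\binom{k}{m}z^mV^{k-m}_x\]
and show that the family $(W^k_x)_k$ satisfies the defining conditions for the Hadamard coefficients of $P-z$. Since $P-z$ is normally hyperbolic with the same principal symbol, it induces the same connection $\nabla$, so $\rho_x$ is unchanged. The only change is that $P$ is replaced by $P-z$ in the transport equations. The initial condition is immediate, because $W^0_x=V^0_x$ and therefore $W^0_x(x)=1$. It remains to verify
\[(\rho_x-2k)W^k_x=2k(P-z)W^{k-1}_x\quad (k\geq 1),\qquad \rho_x W^0_x=0.\]

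To check this, we use that $\rho_x$ is linear with constant coefficients in $z$. We write $(\rho_x-2k)V^{k-m}_x=(\rho_x-2(k-m))V^{k-m}_x-2mV^{k-m}_x$. By the transport equation for $P$, the first term equals $2(k-m)PV^{k-m-1}_x$, and it is set to zero when $m=k$. This gives
\[(\rho_x-2k)W^k_x=\sum\limits_{m=0}^{k}\binom{k}{m}z^m\left(2(k-m)PV^{k-m-1}_x-2mV^{k-m}_x\right).\]
Next we compare coefficients of $z^m$ with the right-hand side. We have
\[2k(P-z)W^{k-1}_x=\sum_{m=0}^{k-1}2k\binom{k-1}{m}z^mPV^{k-1-m}_x-\sum_{m=1}^{k}2k\binom{k-1}{m-1}z^{m}V^{k-m}_x.\]
The required identities are $2(k-m)\binom{k}{m}=2k\binom{k-1}{m}$ and $2m\binom{k}{m}=2k\binom{k-1}{m-1}$. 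Both are standard binomial identities, so the two sides agree.

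Uniqueness in Definition/Proposition \ref{dfV}, applied to $P-z$, then gives $V^k_x(z)=W^k_x$. The only point needing care is justifying that $P-z$ has the same $\rho_x$. Writing $P=\square^\nabla+B$, we get $P-z=\square^\nabla+(B-z)$, so the connection is the same. Since $\rho_x$ depends only on $\nabla$ and $\Gamma_x$, it is unchanged. The rest is the binomial bookkeeping above, which is the main (mild) obstacle; one must keep track of the boundary terms $m=0$ and $m=k$.
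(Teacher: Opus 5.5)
Your proof is correct and follows the paper's argument essentially verbatim: you verify the initial condition and the transport equations for $P-z$ via the split $(\rho_x-2k)V^{k-m}_x=(\rho_x-2(k-m))V^{k-m}_x-2mV^{k-m}_x$ and the two binomial identities, then invoke uniqueness. Your explicit remark that $P-z=\square^\nabla+(B-z)$ leaves $\nabla$, and hence $\rho_x$, unchanged is a step the paper uses tacitly; note that it is this decomposition, not the equal principal symbol alone, that justifies it.
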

\begin{rem}
	The corresponding formula for the heat coefficients
	\[a_k(\Delta-z)=\sum\limits_{m=0}^k\frac{1}{m}z^ma_{k-m}(\Delta)\]
	can be shown analogously. Alternatively, that formula can also be deduced by Taylor expanding $e^{tz}$ and multiplying out the expansions in 
	\[e^{-t(\Delta-z)}=e^{tz}e^{-t\Delta}.\]
\end{rem}
\begin{proof}
	Let
	\[V_k(z):=\sum\limits_{m=0}^k\binom{k}{m}z^mV^{k-m}_x\]
	We need to show that $V_0(z)(x,x)=1$ and the $V_k(z)$ satisfy the transport equations
	\[(\rho_x-2k)V_k(z)=2k(P-z)V_{k-1}(z)\]
	with $\rho$ as in \ref{dfrho}.
	The former holds, as $V_0(z)=V^0_x$. For the latter, we calculate using the transport equation for $P$:
	\begin{align*}
		&(\rho_x-2k)V_k(z)\\
		&=\sum\limits_{m=0}^k\binom{k}{m}z^m(\rho_x-2k)V^{k-m}_x\\
		&=\sum\limits_{m=0}^k\binom{k}{m}z^m((\rho_x-2(k-m))V^{k-m}_x-2mV^{k-m}_x)\\
		&=\sum\limits_{m=0}^k\binom{k}{m}z^m2(k-m)PV^{k-m-1}_x-\sum\limits_{m=0}^k\binom{k}{m}z^m2mV^{k-m}_x\\
		&=\sum\limits_{m=0}^k\binom{k-1}{m}z^m2kPV^{k-m-1}_x-\sum\limits_{m=1}^k\binom{k-1}{m-1}z^m2kV^{k-m}_x\\
		&=\sum\limits_{m=0}^{k-1}\binom{k-1}{m}z^m2kPV^{k-m-1}_x-\sum\limits_{m=0}^{k-1}\binom{k-1}{m}z^{m+1}2kV^{k-m-1}_x\\
		&=2k(P-z)\sum\limits_{m=0}^{k-1}\binom{k-1}{m}z^mV^{k-m-1}_x\\
		&=2k(P-z)V_{k-1}(z).
	\end{align*}
	The $V_k(z)$ thus satisfy the transport equations for $P-z$, so they are the desired Hadamard coefficients.
\end{proof}
This proposition allows us to perform the previous analysis for $P-z$ and still get a result that depends on the Hadamard coefficients of $P$. The additional dependence on the parameter $z$ then allows us to "invert" the formula for the expansion coefficients and obtain a formula for the Hadamard coefficients. To facilitate the inversion process, we will assume that $w$ is a unit speed geodesic ($\nu_w=1$).
\begin{defprop}
	\label{Expwithz}
	Assume for this section that $\nu_w=1$.

	For a function $F$ with an asymptotic expansion \[F\sim \sum\limits_{p\in P}a_p p\] with $P$ a set of monomials in one or more variables, we denote the coefficients of each $p$ by
	\[F[[p]]:=a_p.\]
	Define
	\[L(s,z):=w^*(G_{P-z,x})[f_s]\]
	to be the action-like function constructed in the previous section, with the Green's operator for $P-z$ instead of that for $P$.
	Under the assumptions of Theorem \ref{intexp}, we have for any $z\in \C$ the following asymptotic expansion in $s$:
	\begin{align*}
		L(s,z)\stackrel{s\rightarrow 0}{\sim}\insum{k}\insum{m} L_{k,m} z^m s^{2k+2m+3-d}
	\end{align*}
	with 
	\[L_{k,m}:=L[[ s^{2k+2m+3-d}]][[z^m]]=\sum\limits_{l=0}^k\binom{l+m}{m} a(l+m,k-l)\left(V^{l}_x\circ w\right)^{(2k-2l)}(0)\]
	for the coefficients $a$ from Theorem \ref{intexp}.
\end{defprop}
\begin{rem}
	The $L_{k,m}$ can be recovered from $L$, e.g. via
	\[L[[ s^{2k+2m+3-d}]][[z^m]]=\frac{1}{m!(2k+2m)!}\tfrac{\partial}{\partial z}^m\tfrac{\partial}{\partial s}^{2k+2m}s^{d-3} L(s,z).\] 
	(Differentiating the asymptotic expansion actually requires further properties of the remainder terms, but in our case all remainder terms are of the form $\intR E(x)f_s(x)dx$ for functions $E$ that are sufficiently often differentiable with vanishing derivatives at $0$. These remainder terms do not lose too many $s$-powers when differentiated.)
	Our goal will thus be to express the Hadamard coefficients in terms of the $L_{k,m}$.
\end{rem}
\begin{proof}
	Using Theorem \ref{intexp} and Proposition \ref{Vz} before doing substitutions ($l=k-m$ and $K=l+n$) to rearrange the summands, we obtain
	\begin{align*}
		L(s,z)=&w^*(G_{P-z,x})[f_s]\\
		&\stackrel{s\rightarrow 0}{\sim}\insum{k}\insum{n}a(k,n)\left(V^{k}_x(z)\circ w\right)^{(2n)}(0)s^{2k+2n+3-d}\\
		&=\insum{k}\insum{n}\sum\limits_{m=0}^k\binom{k}{m}z^m \left(V^{k-m}_x\circ w\right)^{(2n)}(0)a(k,n)s^{2k+2n+3-d}\\
		&=\insum{l,m,n}\binom{l+m}{m}z^m \left(V^{l}_x\circ w\right)^{(2n)}(0)a(l+m,n)s^{2(l+m)+2n+3-d}\\
		&=\insum{K,m}\sum\limits_{l=0}^K\binom{l+m}{m} a(l+m,K-l)\left(V^{l}_x\circ w\right)^{(2K-2l)}(0)z^m s^{2K+2m+3-d}\\		
	\end{align*}
\end{proof}

For fixed $k$, this gives us a system of linear equations (one for each $m$) relating the $L_{k,m}$ to the derivatives of the Hadamard coefficients. Solving these equations amounts to finding a left inverse for the matrix with entries
\[\alpha(k)_{ml}=\binom{l+m}{m} a(l+m,k-l).\]
In fact, since we don't care about the derivatives, it suffices to find the last row of a left inverse:
\begin{cor}
	Let $k\in \N$ Assume $(q_{km})_{m\leq N}$ for $N\in \N$ are coefficients such that
	\[\sum\limits_{m=0}^Nq_{km}\alpha(k)_{ml}=\delta_{kl}\]
	for $(\alpha(k)_{ml})$ as defined above and all $l\in \N$. Then we have
	\[V^{k}_x(x)=\sum\limits_{m=0}^N q_{km}L_{k,m}.\]
\end{cor}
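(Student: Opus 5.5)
This is a direct linear-algebra consequence of Definition/Proposition \ref{Expwithz}. The plan is to insert the formula for $L_{k,m}$ into $\sum_{m=0}^N q_{km}L_{k,m}$, exchange the two finite sums, and apply the hypothesis on the $q_{km}$.

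First we fix the ranges. By Definition/Proposition \ref{Expwithz}, for each $m\le N$,
\[L_{k,m}=\sum_{l=0}^{k}\binom{l+m}{m}a(l+m,k-l)\left(V^l_x\circ w\right)^{(2k-2l)}(0)=\sum_{l=0}^k \alpha(k)_{ml}\,\left(V^l_x\circ w\right)^{(2k-2l)}(0).\]
The sum over $l$ is finite and runs only up to $k$. So the hypothesis $\sum_m q_{km}\alpha(k)_{ml}=\delta_{kl}$ is only needed for $0\le l\le k$, which the statement (valid for all $l$) certainly provides. Exchanging the finite sums gives
\[\sum_{m=0}^N q_{km}L_{k,m}=\sum_{l=0}^k\Big(\sum_{m=0}^N q_{km}\alpha(k)_{ml}\Big)\left(V^l_x\circ w\right)^{(2k-2l)}(0)=\sum_{l=0}^k\delta_{kl}\left(V^l_x\circ w\right)^{(2k-2l)}(0).\]

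Only the term $l=k$ survives. It carries the zeroth derivative, namely $(V^k_x\circ w)(0)=V^k_x(w(0))=V^k_x(x)$, since $w(0)=x$ by Fixed Notation \ref{dfw}. This gives the claim.

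The only point needing care is that the $L_{k,m}$ are well defined as expansion coefficients in the double expansion of $L(s,z)$. In particular, the coefficient of $z^m s^{2k+2m+3-d}$ must be uniquely determined. The family $(k,m)\mapsto (2k+2m,m)$ is injective, so distinct pairs give distinct monomials $z^ms^{2k+2m+3-d}$, and extracting first in $s$ and then in $z$ (the expansion coefficients being polynomial in $z$) is unambiguous. This is already built into Definition/Proposition \ref{Expwithz}, so there is no real obstacle. The substantive difficulty, exhibiting explicit $q_{km}$, is deferred to the later theorem.
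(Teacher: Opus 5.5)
Your proposal is correct and follows the paper's proof exactly: substitute the formula for $L_{k,m}$ from Definition/Proposition \ref{Expwithz}, exchange the finite sums, and apply the hypothesis so that only the $l=k$ term $(V^k_x\circ w)(0)=V^k_x(x)$ survives. Your remark on why the coefficients $L_{k,m}$ are unambiguously determined is a harmless addition that the paper leaves implicit.
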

\begin{proof}
	We have
	\begin{align*}
		\sum\limits_{m=0}^N q_{km}L_{k,m}
		&=\sum\limits_{l=0}^k \sum\limits_{m=0}^N q_{km}\alpha(k)_{ml}\left(V^{l}_x\circ w\right)^{(2k-2l)}(0)\\
		&=\sum\limits_{l=0}^k \delta_{kl}\left(V^{l}_x\circ w\right)^{(2k-2l)}(0)\\
		&=V^{k}_x(x).
	\end{align*}
\end{proof}

This left inverse is far from unique. A possible choice of left inverse can be found by inverting any quadratic block in this matrix. For a more versatile result, we do not commit to a fixed block but compute the inverse for the block where $m$ starts at some offset $o$. As the computation of these inverses is purely combinatoric and of little significance to understanding the overall argument, we omit it here, refer the interested reader to \cite[Section 5.2]{Ron} and only give the final result (\cite[Theorem 5.2.12]{Ron}):
\begin{thm}
	\label{main}
	Assume that $\M'(f)$ is non-zero at integers and let $o\in \N_0$.
	We have
	\[V^k_x(x)=\sum\limits_{m=0}^k q(k,m,o)L_{k,m+o}\]
	with 
	\[q(k,m,o):=\frac{\pi^{\frac{d}{2}-1}4^{k+m+o}(m+o)!k!}{\M'(f)({2k+2m+2o-d+3})}\binom{\frac{d}{2}-1-o-k}{m}\binom{2k+o+1-\frac{d}{2}}{k-m}\]
	and $L_{k,m}$ is defined as above, i.e.
	\[L_{k,m+o}=w^*(G_{P-z,x})[f_s][[s^{2k+2m+2o-d+3}]][[z^{m+o}]].\]
\end{thm}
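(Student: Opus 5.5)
By the preceding Corollary, it suffices to show that the coefficients $q_{km}:=q(k,m-o,o)$ for $o\le m\le k+o$ (and $0$ otherwise) form the last row of a left inverse. Concretely, for every $l\in\{0,\dots,k\}$ we must have
\[\sum_{m=0}^{k} q(k,m,o)\binom{l+m+o}{m+o}\,a(l+m+o,\,k-l)=\delta_{kl}.\]
Only $l\le k$ matters, since $a(\cdot,k-l)$ is only defined, or relevant, for $k-l\ge 0$. The plan is to insert the explicit formula for $a$ from Theorem \ref{intexp} and reduce this to a purely combinatorial identity for binomial coefficients with non-integer upper index.

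First I will substitute. Here $a(l+m+o,k-l)$ contains $\M'(f)(2(l+m+o)+2(k-l)+3-d)=\M'(f)(2k+2m+2o+3-d)$, which does not depend on $l$. It therefore cancels exactly against the denominator of $q(k,m,o)$; this is why the $q$ are built with that Mellin factor, and the hypothesis that $\M'(f)$ is nonzero at integers is used here. Similarly, the factor $\pi^{\frac{2-d}{2}}$ cancels against $\pi^{\frac d2-1}$. The powers of $4$ combine to $4^{k+m+o}/4^{l+m+o}=4^{k-l}$. The factorials combine as
\[\frac{(m+o)!\,k!}{(l+m+o)!}\binom{l+m+o}{m+o}=\frac{k!}{l!},\]
and $a$ contributes an additional $\frac{(k-l)!}{(2k-2l)!}$. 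Everything not involving $m$ factors out as
\[C_{k,l}:=\frac{4^{k-l}k!(k-l)!}{l!(2k-2l)!}.\]
What remains to show is
\[C_{k,l}\sum_{m=0}^k\binom{\tfrac d2-1-o-k}{m}\binom{2k+o+1-\tfrac d2}{k-m}\binom{k+m+o+1-\tfrac d2}{k-l}=\delta_{kl}.\]

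Next I check the case $l=k$. Here the last binomial equals $1$ and $C_{k,k}=1$, so the sum is a Vandermonde convolution: $\sum_m\binom{A}{m}\binom{B}{k-m}=\binom{A+B}{k}$ with $A+B=k$, which gives $\binom{k}{k}=1$. This also explains the choice of upper indices.

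The main obstacle is the case $l<k$, where I must show that the sum vanishes. Set $j=k-l\ge1$. The factor $\binom{k+m+o+1-\frac d2}{j}$ is a polynomial in $m$ of degree $j$. I would write it as a combination of the falling factorials $\binom{m+c}{i}$ with $i\le j$, or more conveniently in terms of $\binom{-A+m-1}{i}$-type terms adapted to $A=\tfrac d2-1-o-k$. Since $\binom{k+m+o+1-\frac d2}{j}=\binom{m-A}{j}$, I would apply Chu--Vandermonde in the form $\binom{A}{m}\binom{m-A}{j}$ and expand using the identity $\binom{A}{m}\binom{m-A+\dots}{\dots}$, i.e. express $\binom{m-A}{j}$ via the upper-negation $\binom{m-A}{j}=(-1)^j\binom{A-m+j-1}{j}$ and then $\binom{A}{m}\binom{A-m+j-1}{j}$ as a trinomial revision. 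The goal of this step is to obtain a generalized Vandermonde sum $\sum_m\binom{A'}{m}\binom{B}{k-m}$ with $A'+B=k-j$, which is $\binom{k-j}{k}=0$ for $j\ge1$. If this manipulation does not close up cleanly, I will instead take the generating-function route. The sum in $m$ is the coefficient of $x^k$ in $(1+x)^B\cdot\binom{m-A}{j}$ applied to $(1+x)^A$, where the factor $\binom{m-A}{j}$ acts through the operator $\binom{x\partial_x-A}{j}$. Since $(1+x)^{A+B}=(1+x)^k$ has degree $k$, the operator produces a polynomial in which the relevant coefficient can be checked to vanish by evaluating the corresponding hypergeometric ${}_3F_2$ via Pfaff--Saalschütz; the parameters look balanced. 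This combinatorial verification is the only genuinely nontrivial part. Everything else follows from Definition/Proposition \ref{Expwithz} together with the preceding Corollary.
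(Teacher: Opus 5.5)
Your reduction is correct and follows the paper's strategy. Via the Corollary, the theorem amounts to checking that $q(k,\cdot,o)$ is the last row of a left inverse of the block $(\alpha(k)_{m+o,l})_{0\le m,l\le k}$. The paper omits exactly this combinatorial verification and defers it to the thesis. Your cancellations and the constant $C_{k,l}$ are right. The case $l=k$ is also right: it is Vandermonde with $A+B=k$, where $A=\frac d2-1-o-k$ and $B=2k+o+1-\frac d2$.

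The gap is the case $l<k$. Here you need
\[\sum_{m=0}^k\binom{A}{m}\binom{B}{k-m}\binom{m-A}{j}=0\qquad\text{for }1\le j\le k.\]
This is the substantive part of the theorem, and you do not prove it. Neither sketched route closes as written.

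\emph{First route.} After upper negation the summand is $\binom{A}{m}\binom{A-m+j-1}{j}$. This is not a trinomial-revision product, since that requires upper index $A-m$. So it does not reduce to a single Vandermonde sum.

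\emph{Second route.} The sum equals $\binom{B}{k}\binom{-A}{j}\,{}_3F_2(-k,-A,1-A;\,B-k+1,\,1-A-j;\,1)$. This series is balanced only for $j=k$, so Pfaff--Saalsch\"utz does not apply. It does collapse to a ${}_2F_1$, because $B-k+1=1-A$. However, that hypergeometric normalization can degenerate for integer $A$, which is exactly the even-dimensional case. You would then still need a continuity or polynomial-identity argument.

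The missing observation is that $A+B=k$ gives $m-A=B-(k-m)$. So the third binomial should be paired with the $B$-factor, not the $A$-factor, and then trinomial revision applies directly:
\[
\sum_{m=0}^{k}\binom{A}{m}\binom{B}{k-m}\binom{B-(k-m)}{j}
=\binom{B}{j}\sum_{m=0}^{k}\binom{A}{m}\binom{B-j}{k-m}
=\binom{B}{j}\binom{k-j}{k}=0\qquad(1\le j\le k).
\]
This is a polynomial identity in $A$, so it holds for all $d$ and $o$. With it inserted, your argument is complete.
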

\begin{rem}
	\label{qsipm}
	There are some choices of $o$ that make the formulas a bit simpler:
	\begin{enumerate}
		\item For $o=0$, we get
			\[q(k,m,0)=\frac{\pi^{\frac{d}{2}-1}4^{k+m}m!k!}{\M'(f)({2k+2m-d+3})}\binom{\frac{d}{2}-1-k}{m}\binom{2k+1-\frac{d}{2}}{k-m}\]
		\item For $d$ even and $o=\frac{d}{2}-1$, we obtain
			\[q(k,m,\frac{d}{2}-1)=\frac{(4\pi)^{\frac{d}{2}-1}4^{k+m}(m+\frac{d}{2}-1)!k!}{\M'(f)({2k+2m+1})}\binom{-k}{m}\binom{2k}{k-m}.\]
		\item For $d$ even, $k\leq \frac{d}{2}-1$ and $o=\frac{d}{2}-1-k$, we obtain for $m\neq 0$:
		\[q(k,m,o)=0,\]
		as the first binomial coefficient vanishes.
		The only non-vanishing summand comes from
		\[q(k,0,o)=\frac{(4\pi)^{\frac{d}{2}-1}(\frac{d}{2}-1-k)!k!}{\M'(f)({1})}.\]
		This much simpler formula reflects the fact that the $\frac{d}{2}-1$ coefficient can be extracted from the expansion directly, and the offset $o$ shifts the $k$-th Hadamard coefficient to that position.
	\end{enumerate}	
\end{rem}
	The first Hadamard coefficient is related to the scalar curvature via (\cite[before 2.3.2]{BGP}):
	\[V^1_x(x)=\frac{1}{6}\scal(x)\]
	Thus we obtain as a special case of the above:
	\begin{cor}
		For $d>2$ even, we have
		\[\scal(x)=\frac{6(4\pi)^{\frac{d}{2}-1}(\frac{d}{2}-2)!}{\M'(f)({1})}w^*(G_{\square-z,x})[f_s][[s^{1}]][[z^{\frac{d}{2}-2}]].\]
		in particular, for $d=4$:
		\[\scal(x)=\frac{24\pi}{\M'(f)({1})}w^*(G_{\square,x})[f_s][[s^{1}]].\]
	\end{cor}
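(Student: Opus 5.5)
The plan is to specialize Theorem \ref{main} to $P=\square$ and $k=1$, using the offset from item 3 of Remark \ref{qsipm}, and then to substitute $V^1_x(x)=\tfrac16\scal(x)$.

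First, since $d>2$ is even, we have $d\geq 4$. Hence $k=1\leq \tfrac d2-1$, and the offset $o:=\tfrac d2-1-k=\tfrac d2-2$ lies in $\N_0$. Theorem \ref{main} with this $o$ gives
\[V^1_x(x)=\sum_{m=0}^{1} q(1,m,\tfrac d2-2)\,L_{1,m+\frac d2-2}.\]
The first binomial coefficient in $q(1,m,o)$ is $\binom{\frac d2-1-o-1}{m}=\binom{0}{m}$. It vanishes for $m=1$, so only the $m=0$ term survives. For that term the second binomial is $\binom{2+o+1-\frac d2}{1}=\binom{1}{1}=1$, and the argument of $\M'(f)$ is $2+2o-d+3=1$. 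Therefore
\[q(1,0,\tfrac d2-2)=\frac{\pi^{\frac d2-1}4^{\frac d2-1}(\tfrac d2-2)!\,1!}{\M'(f)(1)}=\frac{(4\pi)^{\frac d2-1}(\tfrac d2-2)!}{\M'(f)(1)},\]
which agrees with the formula in Remark \ref{qsipm}. Moreover,
\[L_{1,\frac d2-2}=w^*(G_{\square-z,x})[f_s][[s^{1}]][[z^{\frac d2-2}]].\]
We need $\M'(f)(1)\neq 0$, which is the standing assumption on $f$. Multiplying by $6$ and using $V^1_x(x)=\tfrac16\scal(x)$, which holds for the d'Alembertian, gives the general formula.

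For $d=4$ we have $o=0$, so the $z^0$ coefficient is required. By Definition/Proposition \ref{Expwithz}, for a fixed power $s^{2k+2m+3-d}$ only finitely many pairs $(k,m)$ contribute. Hence the $s^1$ coefficient of $L(s,z)$ is a polynomial in $z$, and its $z^0$ coefficient equals its value at $z=0$. Evaluating at $z=0$ gives exactly the $s^1$ coefficient of $w^*(G_{\square,x})[f_s]$, since $\square-0=\square$. The prefactor becomes $6\cdot 4\pi\cdot 0!=24\pi$.

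The only point needing care is this last identification of the $z^0$ coefficient with the expansion at $z=0$, together with checking that the degenerate binomial $\binom{0}{m}$ is consistent with the conventions of Theorem \ref{main}. Everything else is direct substitution.
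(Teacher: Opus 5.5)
Your proposal is correct and is exactly the paper's argument: the corollary is obtained by specializing Theorem \ref{main} to $k=1$ with the offset $o=\tfrac d2-2$ from item 3 of Remark \ref{qsipm}, and inserting $V^1_x(x)=\tfrac16\scal(x)$. Your evaluation of $q(1,0,\tfrac d2-2)$ and of the exponents in $L_{1,\frac d2-2}$ is right, and your reduction of the $z^0$ coefficient to the expansion at $z=0$ for $d=4$ validly fills in a detail the paper leaves implicit.
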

Instead of using Green's  operators for $P-z$, one can use powers of Green's operators instead. The reason, morally, why this works is the following calculation (viewing $P$ on past/future compactly supported distributions):
\begin{align*}
	G^\pm_{P-z}&=(P-z)^{-1}=P^{-1}(1-zP^{-1})^{-1}\\
	&=P^{-1}\sum\limits_{n=0}^\infty z^n P^{-n}\\
	&=\sum\limits_{n=0}^\infty z^n (G^\pm)^{n+1},
\end{align*}
so (at least formally)
\[G^\pm_{P-z}[[z^m]]=(G^\pm)^{m+1}.\]
Indeed, one obtains the following result (\cite[Proposition 5.2.17]{Ron}):
\begin{prop}
	we have for any $m,j\in \N$:
	\[L_{k,m}=w^*\left (G^{+\ m+1}(\delta_x)-G^{-\ m+1}(\delta_x)\right )[f_s][[s^{2k+2m+3-d}]].\]
\end{prop}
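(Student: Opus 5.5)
The idea is to compare two asymptotic expansions of the same kind: the one for $L(s,z)$ from Definition/Proposition \ref{Expwithz}, and an analogous one for the pulled-back difference of powers of Green's operators. Both will be of the form of Theorem \ref{intexp}, so it is enough to show that they are built from the same Hadamard-type coefficients.

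\textbf{Step 1: Hadamard expansion of powers.} For $(G^\pm)^{m+1}(\delta_x)$ I claim
\[(G^\pm)^{m+1}(\delta_x)|_U\sim\insum{k}\binom{k+m}{m}V^{k}_x\,R_\pm(2k+2m+2,x).\]
To justify this, I would first check the formal identity. Proposition \ref{Vz} gives $V^k(z)=\sum_m\binom{k}{m}z^mV^{k-m}$. Inserting this into $G^\pm_{P-z}\sim\sum_k V^k(z)R_\pm(2k+2)$ and reindexing with $k=l+m$ shows that the $z^m$-coefficient is $\sum_l\binom{l+m}{m}V^lR_\pm(2l+2m+2)$.

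I would then prove the claim rigorously by induction on $m$, checking directly that $P$ applied to the candidate series for $m$ gives the candidate series for $m-1$, up to arbitrarily smooth remainders. For each term, Proposition \ref{Prho} gives
\[P\bigl(V^lR_\pm(2l+2m+2)\bigr)=(PV^l)R_\pm(2l+2m+2)-\tfrac{1}{2l+2m}\bigl((\rho_x-2l-2m)V^l\bigr)R_\pm(2l+2m).\]
The transport equation turns $(\rho_x-2l-2m)V^l$ into $2lPV^{l-1}-2mV^l$. The terms then telescope through the binomial identity $\binom{l+m}{m}\frac{l}{l+m}=\binom{l+m-1}{m}$, and what survives is $\binom{l+m-1}{m-1}V^lR_\pm(2l+2m)$, which is exactly the candidate series for $m-1$.

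Uniqueness then finishes the induction. Write $S_N$ for a truncation of the candidate series. The difference $(G^\pm)^{m+1}\delta_x-S_N$ is supported in $J_\pm(x)$, and $P$ applied to it equals $(G^\pm)^m\delta_x-S'_N$ plus a $C^j$ term, which is $C^j$ by the induction hypothesis. Applying $G^\pm$, which preserves the relevant regularity up to a loss one can control, shows the remainder is $C^j$ locally. This mirrors how \cite[Proposition 2.5.1]{BGP} obtains the expansion for $m=0$.

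\textbf{Step 2: take differences, pull back and expand.} Subtracting the $+$ and $-$ expansions and pulling back along $w$, which is legitimate by Corollary \ref{Rieszpullback}, I apply Proposition \ref{WRint} with $\alpha=2l+2m+2$ and Lemma \ref{MsExp}, exactly as in the proof of Theorem \ref{intexp}, with $\nu_w=1$. Summand $l$ then contributes terms of the form
\[\binom{l+m}{m}\,a(l+m,n)\,(V^l\circ w)^{(2n)}(0)\,s^{2(l+m)+2n+3-d}.\]
Remainders are handled by Lemma \ref{IntRem}. Collecting the coefficient of $s^{2k+2m+3-d}$, i.e.\ setting $n=k-l$, gives exactly the formula for $L_{k,m}$ in Definition/Proposition \ref{Expwithz}, which proves the statement.

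\textbf{Main obstacle.} The delicate part is Step 1. One has to make sure that $(G^\pm)^{m+1}\delta_x$ is well-defined, which the past/future compact setup of Definition/Proposition \ref{dfG} provides. One also has to control how the regularity of the remainder behaves under $G^\pm$. If the regularity argument gets awkward, I would instead use the holomorphy of $z\mapsto G^\pm_{P-z}$ and justify the formal Neumann series $G^\pm_{P-z}=\sum_n z^n(G^\pm)^{n+1}$ on compact time slabs, where the Volterra-type structure of $G^\pm$ gives convergence for all $z$. The coefficient of $z^m$ can then be read off directly from the $z$-polynomial expansion of $L$.
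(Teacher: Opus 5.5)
Your proof is correct, but it takes a different route from the one the paper indicates.

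\textbf{The paper's route.} The paper gives no argument here beyond the formal Neumann series $G^\pm_{P-z}=\sum_n z^n(G^\pm)^{n+1}$ and a pointer to \cite[Proposition 5.2.17]{Ron}. Along that route the powers enter as the $z$-Taylor coefficients of the Green's operators of $P-z$, so the statement is inherited from the expansion of $L(s,z)$ in Definition/Proposition \ref{Expwithz}. Making this rigorous needs holomorphy of $z\mapsto G^\pm_{P-z}$. It also needs $s$-remainders that are uniform for $z$ in compact sets, which the remark after Theorem \ref{intexp} points to. That uniformity is what lets taking the $z^m$-coefficient (e.g.\ by a Cauchy integral) commute with taking the $s$-coefficient.

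\textbf{Your route.} You eliminate $z$ entirely by proving $(G^\pm)^{m+1}\delta_x\sim\sum_l\binom{l+m}{m}V^l_xR_\pm(2l+2m+2,x)$ directly. Your telescoping with Proposition \ref{Prho} and the transport equations is right; it is the same bookkeeping as in the proof of Proposition \ref{Vz}. Step 2 is then literally the proof of Theorem \ref{intexp}, with $V^{l+m}_x$ replaced by $\binom{l+m}{m}V^l_x$ in front of $R(2(l+m)+2,x)$.

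\textbf{Trade-off.} You gain a distribution-level statement, the Hadamard expansion of powers of Green's operators, and you need no uniformity in $z$. The price is the regularity step, which deserves one more sentence. $S_N$ lives only on $U$ and is singular along the whole light cone, so you need a cutoff $\phi$ and the support property of $G^\pm$. These show that on a small neighbourhood $W$ of $x$ the remainder $D$ coincides with $G^\pm(\psi PD)$. Here $\psi$ is supported where $\phi=1$ and equals $1$ on the compact set $J^\mp(\overline W)\cap J^\pm(x)$. After that, a fixed loss of derivatives for $G^\pm$ (energy estimates plus Sobolev embedding) suffices, since $N$ is arbitrary.

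\textbf{The fallback.} Your fallback, as phrased, skips exactly the point the paper's route must handle. Knowing the $z$-Taylor coefficients of $G^\pm_{P-z}$, and that each $s$-coefficient of $L(s,z)$ is polynomial in $z$, does not by itself let you read off the $z^m$-coefficient. For that you also need uniform control of the $s$-remainders in $z$.
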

Inserting this into Theorem \ref{main} yields
\begin{cor}
	\label{Vwithpowers}
	Assume that $\M'(f)$ is non-zero at integers.
	We have
	\[V^k_x(x)=\sum\limits_{m=0}^k q(k,m,o)w^*\left (G^{+\ m+o+1}(\delta_x)-G^{-\ m+o+1}(\delta_x)\right )[f_s][[s^{2k+2m+2o+3-d}]]\]
	with $q(k,m,o)$ as in Theorem \ref{main}.
\end{cor}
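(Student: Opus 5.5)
The statement to prove is Corollary \ref{Vwithpowers}. It follows from Theorem \ref{main} once we know that $L_{k,m}$ can be computed from powers of Green's operators. The proposition stated just before it (\cite[Proposition 5.2.17]{Ron}) gives exactly this:
\[L_{k,m}=w^*\left(G^{+\,m+1}(\delta_x)-G^{-\,m+1}(\delta_x)\right)[f_s][[s^{2k+2m+3-d}]].\]
The proof then has two steps.

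First, fix $o\in\N_0$ and $k$, and apply Theorem \ref{main}. Its hypotheses are that $\M'(f)$ is nonzero at integers, that $w$ is a unit speed geodesic, and that $f$ is odd; all three are assumed in the corollary. This gives
\[V^k_x(x)=\sum_{m=0}^k q(k,m,o)L_{k,m+o}.\]

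Second, substitute the proposition into each summand, with $m$ replaced by $m+o$. The proposition then produces $w^*\left(G^{+\,m+o+1}(\delta_x)-G^{-\,m+o+1}(\delta_x)\right)[f_s]$, read off at the coefficient of $s^{2k+2(m+o)+3-d}=s^{2k+2m+2o+3-d}$. This exponent matches the one in the claimed formula, and with it the claimed identity follows termwise.

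The only real care point is that the proposition is quoted rather than proved in the paper. If I had to justify it, I would compare asymptotic expansions rather than work with the formal Neumann series. I would first show that $G^{\pm\,m+1}(\delta_x)$ admits a Hadamard-type expansion $\sum_j W^{j,m}_x R_\pm(2j+2m+2,x)$. Here $W^{j,m}$ is the coefficient of $z^m$ in $V^{j+m}_x(z)$ from Proposition \ref{Vz}, rescaled by the factors needed so that $P$ applied to the series gives the previous power, using Proposition \ref{Prho} and $\square R(\alpha+2)=R(\alpha)$. I would establish this by induction on $m$ via $P\,G^{\pm\,m+1}=G^{\pm\,m}$ and uniqueness of the transport equations. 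Running the argument of Theorem \ref{intexp} on this expansion reproduces the coefficient $L_{k,m}$.

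I expect the main obstacle in this justification to be controlling the remainder terms, so that the resulting $s$-expansion coefficients match.
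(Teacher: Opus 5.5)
Your proof is correct and is exactly the paper's: apply Theorem \ref{main} and substitute the quoted proposition (\cite[Proposition 5.2.17]{Ron}) with $m$ replaced by $m+o$; the exponents match since $2k+2(m+o)+3-d=2k+2m+2o+3-d$. Your sketch justifying that proposition goes beyond the paper, which only cites it; in that sketch no rescaling is needed, because the $z^m$-coefficient $\binom{j+m}{m}V^j_x$ of $V^{j+m}_x(z)$ already gives a series $S_m=\sum_j\binom{j+m}{m}V^j_xR_\pm(2j+2m+2,x)$ with $PS_m=S_{m-1}$, by Proposition \ref{Prho} and the transport equations.
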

\section{Hadamard coefficients from the product}
\label{ProdDiag}
In this section, we present a different approach towards extracting the Hadamard coefficients that will not give us the somewhat unwieldy sums that we obtained in the previous section. It will also no longer require our timelike curve to be a geodesic. For this end, we will take a metric product with the real line.
\begin{df}
	\label{dfMtilde}
Let
\[\tilde M:= M\times \R\]
with the product metric (with the standard metric on $\R$, i.e. the factor $\R$ is spacelike). On $\tilde M$, we consider the product operator
\[\tilde P:= P-\partial_t^2\]
on the pull-back bundle of $E$ under the product projection. We set $\tilde U:=U\times \R$.
\end{df}
The extra information we will need will now no longer come from an external parameter $z$ but from the parameter of the extra factor of the product.
In order to apply the analysis from the old setting, we want to show that the properties required of $U$ carry over to $\tilde U$. To show this, we first need the following.
\begin{lemma}
	$U$ is causally convex, i.e. if $\kappa$ is a future oriented causal curve in $M$ connecting $x,x'\in U$, then $\kappa$ must lie entirely in $U$.
\end{lemma}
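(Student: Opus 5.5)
The plan is to argue by contradiction, using that $U$ is causally compatible and convex, so the only way a causal curve in $M$ between points of $U$ could leave $U$ is if it "goes around" outside. Let $\kappa\colon[0,1]\to M$ be future oriented causal with $\kappa(0)=x$, $\kappa(1)=x'$, both in $U$. Then $x'\in J^M_+(x)$. By causal compatibility, $x'\in J^U_+(x)$.

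\textbf{Step 1: reduce to points on the curve.} Let $p=\kappa(t_0)$ be any point of $\kappa$. Then $p\in J^M_+(x)\cap J^M_-(x')$. I want to show $p\in U$. Consider the set $T=\{t\in[0,1]:\kappa([0,t])\subseteq U\}$. It is relatively open in $[0,1]$ because $U$ is open. It remains to rule out that $\kappa$ exits $U$, i.e.\ that $T=[0,t_1)$ with $\kappa(t_1)\in\partial U$ and $t_1\le 1$.

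\textbf{Step 2: exclude exit through the boundary.} This is where I expect the main obstacle, since causal compatibility alone only speaks about endpoints in $U$. My first attempt uses a standard fact: a geodesically convex set has compact closure in a convex normal neighborhood, and $J^M_+(x)\cap J^M_-(x')$ is compact by global hyperbolicity of $M$. I then use causal compatibility on points of $U$ approaching $\kappa(t_1)$: for $t<t_1$, $\kappa(t)\in U$ and $\kappa(t)\le x'$ in $M$, so the unique geodesic in $U$ from $\kappa(t)$ to $x'$ is causal. Letting $t\to t_1$, the limit geodesic from $\kappa(t_1)$ to $x'$ is causal and lies in $\overline U$. Similarly, $x$ connects to $\kappa(t)$ by causal geodesics in $U$. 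If the assembly of these geodesics can be shown to stay in the open set $U$, this forces $\kappa(t_1)\in U$. That would use strong convexity, i.e.\ that geodesics between points of $\overline U$ stay in $U$ except at endpoints, together with the fact that causal curves from $x$ to $x'$ lie in the compact diamond. This contradicts $\kappa(t_1)\in\partial U$.

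\textbf{Fallback.} If Step 2 cannot be closed with the stated hypotheses alone, I would invoke the construction of such neighborhoods from \cite{Min}. There one may choose $U$ causally convex, and in fact causal convexity is part of what "causally compatible globally hyperbolic convex neighborhood" is built from in \cite[Corollary 2]{Min}. Alternatively, I would use that global hyperbolicity of $U$ makes $J^U_+(x)\cap J^U_-(x')$ compact. I would then show it equals $J^M_+(x)\cap J^M_-(x')$ by a connectedness argument: the $M$-diamond is connected, and its intersection with $U$ is both open and closed in it. It is closed because it equals the compact $U$-diamond, by causal compatibility applied to each of its points from both ends. Hence every point of $\kappa$ lies in $U$.
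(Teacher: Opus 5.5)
Your Step 2 does not go through. It relies on compact closure inside a convex normal neighbourhood, ``strong convexity'' of $\overline U$, and convergence of the geodesics from $\kappa(t)$ to $x'$ as $t\to t_1$. None of these is among the hypotheses on $U$: convexity here only means that any two points of $U$ are joined by a unique geodesic in $U$. Even granting them, $\kappa(t_1)$ is an \emph{endpoint} of the limiting geodesic, so ``geodesics stay in $U$ except at endpoints'' says nothing about whether $\kappa(t_1)\in U$. Your first fallback, re-choosing $U$ via Minguzzi's construction, does not prove the lemma for the given $U$ either; it amounts to adding causal convexity to the standing assumptions. Geodesic convexity plays no role in this lemma.

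Your second fallback, however, is a complete and correct proof, and it differs genuinely from the paper's. Since $\kappa$ is future causal from $x$ to $x'$, you have $\kappa([0,1])\subseteq J^M_+(x)\cap J^M_-(x')$. Causal compatibility gives $J^M_+(x)\cap J^M_-(x')\cap U=J^U_+(x)\cap J^U_-(x')$. The right-hand side is compact by global hyperbolicity of $U$, hence closed in $M$. So $\kappa^{-1}(U)$ is both open and closed in $[0,1]$ and contains $0$, hence equals $[0,1]$. Applying the clopen argument to $\kappa^{-1}(U)$ directly spares you from justifying connectedness of the $M$-diamond, though that is also easy, since every point of it is joined to $x$ by a causal curve inside it. Compactness of the $M$-diamond is not needed.

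The paper instead works with a Cauchy hypersurface $\Sigma$ of $U$. It takes the first and last parameters $a\le b$ at which $\kappa$ lies outside $U$, and extends $\kappa|_{[0,a)}$ and $\kappa|_{(b,1]}$ to inextendible causal curves in $U$. It then finds $\kappa(t_1)$ to the future and $\kappa(t_2)$ to the past of $\Sigma$, and uses causal compatibility to build a causal curve in $U$ meeting $\Sigma$ twice. Both arguments use only openness of $U$, global hyperbolicity of $U$ and causal compatibility. Yours is shorter and avoids extending curves and the future/past-of-$\Sigma$ bookkeeping. The cost is that it invokes the compact-diamond characterisation of global hyperbolicity rather than the Cauchy-surface definition the paper states; the two are equivalent by Geroch's theorem, which you should cite. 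Lead with this argument and drop Step 2 and the first fallback.
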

\begin{proof}
	 Assume that $\kappa$ does not lie entirely in $U$. Furthermore, assume without loss of generality that $\kappa$ is defined on $[0,1]$. let \[a:=\min\{t\in[0,1]\mid \kappa(t)\notin U\}\] and \[b:=\max\{t\in[0,1]\mid \kappa(t)\notin U\}.\]
	 Extend $\kappa_{[0,a)}$ backwards and $\kappa_{(b,0]}$ forwards to obtain inextendible curves $\kappa_1$ and $\kappa_2$ in $U$. 
	Let $\Sigma$ be a cauchy hypersurface in $U$. As both $\kappa_1$ and $\kappa_2$ must intersect $\Sigma$, crossing from its past to its future, we must have $t_1\in [0,a)$ and $t_2\in(b,1]$ such that $\kappa(t_1)$ lies in the future of $\Sigma$ and $\kappa(t_2)$ lies in the past of $\Sigma$.
	 Then restricting $\kappa$ yields a future causal curve in $M$ from $\kappa(t_1)$ to $\kappa(t_2)$. By causal compatibility, there must be a future causal curve in $U$ that also joins those points. We can extend this curve by following $\kappa_1$ and $\kappa_2$ to obtain a future causal curve that intersects $\Sigma$ more than once, which is a contradiction.
\end{proof}
We can now show that $\tilde U$ satisfies everything we need.
\begin{prop}
	$\tilde U$ is again globally hyperbolic, geodesically convex and causally compatible in $\tilde M$.
\end{prop}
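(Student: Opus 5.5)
I will treat the three properties separately, always using the product structure $\tilde g = g + dt^2$. The basic facts are these. A curve $(\kappa,\tau)$ in $\tilde M$ is causal if and only if $g(\dot\kappa,\dot\kappa)+\dot\tau^2\le 0$. Its projection $\kappa$ is therefore causal in $M$, with the same time orientation, and it is timelike except where $\dot\kappa=0$; that can only happen when $\dot\tau=0$ as well.

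\emph{Global hyperbolicity.} Take a Cauchy hypersurface $\Sigma$ of $U$ and show that $\Sigma\times\R$ is a Cauchy hypersurface of $\tilde U$. Let $\tilde\kappa=(\kappa,\tau)$ be an inextendible causal curve in $\tilde U$. After reparametrising, its projection $\kappa$ is a causal curve in $U$. The key point is that $\kappa$ is inextendible. If $\kappa$ had an endpoint in $U$, then the causality condition $|\dot\tau|\le\sqrt{-g(\dot\kappa,\dot\kappa)}$ bounds the $\R$-component by a multiple of the $h$-length of $\kappa$, for a complete auxiliary Riemannian metric $h$ on $U$. So $\tau$ would also converge, and $\tilde\kappa$ would be extendible. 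Hence $\kappa$ meets $\Sigma$ exactly once, and so $\tilde\kappa$ meets $\Sigma\times\R$ exactly once. Alternatively, I can use the criterion ``causal + compact causal diamonds'': $\tilde J(p)\cap\tilde J(q)$ projects into $J(p)\cap J(q)$, and its $\R$-component is bounded by the time-separation, so the diamond is closed and bounded, hence compact. The same argument with $U$ replaced by $M$ shows that $\tilde M$ is globally hyperbolic, which is needed for the Green's operators there.

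\emph{Geodesic convexity.} Geodesics of $\tilde U$ are exactly the pairs $(c,\ a+bt)$ with $c$ a geodesic of $U$. Given $(y_1,t_1),(y_2,t_2)$, the unique $U$-geodesic from $y_1$ to $y_2$ together with the affine map from $t_1$ to $t_2$ gives a connecting geodesic. Uniqueness follows because each component of a connecting geodesic is forced. The only subtlety is the degenerate case $y_1=y_2$, where the $U$-component is the constant geodesic; this is still unique by convexity of $U$. I would also note that $\exp_{(y,t)}=\exp_y\times(\text{translation})$, which is what is actually used later.

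\emph{Causal compatibility.} This is the step I expect to be the real obstacle, and it is where the preceding lemma on causal convexity of $U$ enters. Let $\tilde\kappa=(\kappa,\tau)$ be a future causal curve in $\tilde M$ joining two points of $\tilde U$. Its projection $\kappa$ is a future causal curve in $M$ (possibly with constant pieces, which can be removed) joining two points of $U$. By the lemma, $\kappa$ lies entirely in $U$, so $\tilde\kappa$ lies in $U\times\R=\tilde U$. Thus $\tilde U$ is causally convex, and causal compatibility follows immediately: $\tilde J^{\tilde M}_\pm(p)\cap\tilde U=\tilde J^{\tilde U}_\pm(p)$. The care needed here is in the case where $\kappa$ is constant on an interval, so that $\tilde\kappa$ is purely spacelike there. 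Causality of $\tilde\kappa$ rules this out, because $\dot\kappa=0$ forces $\dot\tau=0$.
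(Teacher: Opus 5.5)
Your proposal is correct and follows essentially the same route as the paper. The paper also takes $\Sigma\times\R$ as Cauchy hypersurface and gets inextendibility of the projection $\kappa$ by controlling the variation of the $\R$-component through $\kappa$ (you bound it by an auxiliary Riemannian length, where the paper uses the Lorentzian length of $\kappa$), identifies geodesics of $\tilde U$ as pairs of geodesics, and derives causal compatibility from the causal convexity lemma for $U$. One harmless slip: the projection is timelike wherever $\dot\tau\neq 0$ and may be null where $\dot\tau=0$, not ``timelike except where $\dot\kappa=0$''; since only causality of $\kappa$ is ever used, nothing breaks.
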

\begin{proof}
	An arbitrary curve in $\tilde M$ is of the form
	\[\tilde \kappa=(\kappa,\rho),\]
	where $\kappa$ is a curve in $M$ and $\rho$ is a curve in $\R$. As $\tilde M$ has product metric, the Levi-Civita connection also has product structure, so $\tilde \kappa$ is a geodesic if and only if $\kappa$ and $\rho$ are.
	Thus any two points $(x,r)$ and $(x',r')$ are joined by a unique (up to reparametrization) geodesic in $\tilde U$: the pair of the geodesic from $x$ to $x'$ in $U$ and the  geodesic from $r$ to $r'$ in $\R$. Thus $\tilde U$ is geodesically convex.

	Let $\Sigma$ be a Cauchy hypersurface in $U$ and set $\tilde \Sigma=\Sigma\times \R$. Suppose $\tilde\kappa\colon (0,1)\rightarrow \tilde U$ is an inextendible causal curve.  As $\tilde\kappa$ is causal, we must have \[-\<\kappa',\kappa'\>\geq (\rho')^2,\]
	in particular $\kappa$ must also be causal.  As $\tilde \kappa$ is inextendible, $\kappa$ or $\rho$ must be inextendible. We want to show that $\kappa$ must be inextendible either way.
	Suppose that $\rho$ is inextendible. Then it must have infinite length. We get
	\[\int\limits_0^1 \sqrt{|\<\kappa'(t),\kappa'(t)\>|}dt\geq \int\limits_0^1 |\rho'(t)|dt=\infty.\]
	Thus $\kappa$ must have infinite negative length, so it is inextendible either way. As an inextendible causal curve, $\kappa$ must intersect $\Sigma$ exactly once. Thus $\tilde \kappa$ must intersect $\tilde \Sigma$ exactly once. We conclude that $\tilde \Sigma$ is a cauchy hypersurface for $\tilde U$, so $\tilde U$ is globally hyperbolic.

	Finally, suppose that $(x,r)$ and $(x',r')$ are connected by a future oriented causal curve $\tilde k=(\kappa,\rho)$ in $\tilde M$. Then $\kappa$ is a causal curve from $x$ to $x'$. As $U$ is causally convex, $\kappa$ lies in $U$. Thus $\tilde \kappa$ lies in $\tilde U$.
\end{proof}
We see that $\tilde M$, $\tilde P$ and $\tilde U$ satisfy the conditions we placed on $M$, $P$ and $U$. Thus all constructions made with the latter also work with the former.
\begin{df}
	\label{dftilde}
	For quantities constructed from $M$, $P$ and $U$, denote with a tilde the counterpart constructed from $\tilde M$, $\tilde P$ and $\tilde U$. E.g. $\tilde V^k$ denotes the Hadamard coefficients of $\tilde P$ on $\tilde U$ and $\tilde R$ denotes the Riesz distributions on $\tilde U$. For $x\in M$ write $\tilde x$ for $(x,0)$.
\end{df}

The crucial fact that we will exploit is that the new Hadamard coefficients are essentially the old ones. More precisely, we have for any $x,y\in M$, and $r,s\in \R$
\[\tilde V^k_{(x,r)}(y,s)=V^k_x(y),\]
as can be checked by inserting this into the transport equations for $\tilde P$, which differ from those of $P$ only by time derivatives.
This identity will allow us to extract the Hadamard coefficients from the Green's operators on $\tilde M$. As the Hadamard coefficients do not change under a shift in $\R$ direction, while the Riesz distributions do, we can now detect the difference between these two contributions and discard the error terms coming from lower order Riesz-distributions. Looking again at our model case of a power series, our problem becomes analogous to extracting the coefficients $a_k(0)$ from the series
\[\insum k a_k(x)(x^2-r^2)^\frac{k}{2}\]
which is solvable due to the additional $r$-dependence.
\begin{df}
	Let again $w$ be a future oriented timelike curve in $M$ with $w(0)=x$ and initial velocity $\nu_w (0)=1$. For $\xi>1$, define
	\[w_\xi(t):=(w(\xi t), \sqrt{\xi^2-1}\ t).\]
\end{df}
To deal with the derivative terms that will come up after applying Theorem \ref{intexp}, we will need the following Lemma:
\begin{lemma}
	\label{xidiff}
	With
	\[\nu_{w_\xi}(t):=\frac{\tilde \Gamma_{\tilde x}(w_\xi(t))}{t^2},\]
	we have for any $p\in \R$ that
	\[(\nu_{w_\xi}^p \tilde V^k_x\circ w_\xi)(0)=V^k_x(x)\]
	and for $n\geq 1$, its  $n$-th derivative
	\[(\nu_{w_\xi}^p \tilde V^k_x\circ w_\xi)^{(n)}(0)\]
	is a polynomial in $\xi$ without any summands of order less than $n$.
\end{lemma}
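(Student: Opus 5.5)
The plan is to compute $\nu_{w_\xi}$ explicitly in terms of $\nu_w$, and then read off the powers of $\xi$ from the chain rule.

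\textbf{Simplifying the two factors.} Because $\tilde U$ carries the product metric, the exponential map of $\tilde U$ at $\tilde x=(x,0)$ is the product of $\exp_x$ and the identity on $\R$. Hence
\[\tilde\Gamma_{\tilde x}(y,r)=\Gamma_x(y)-r^2 .\]
Substituting $w_\xi$ gives
\[\nu_{w_\xi}(t)=\frac{\Gamma_x(w(\xi t))-(\xi^2-1)t^2}{t^2}=\xi^2\nu_w(\xi t)-\xi^2+1 .\]
By the identity $\tilde V^k_{(x,r)}(y,s)=V^k_x(y)$, we also get $\tilde V^k_{\tilde x}\circ w_\xi(t)=V^k_x(w(\xi t))$. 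So the function under study is
\[h_\xi(t)=\bigl(1+\xi^2(\nu_w(\xi t)-1)\bigr)^p\,V^k_x(w(\xi t)).\]

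\textbf{Value at $t=0$.} Since $\nu_w(0)=1$, the first factor equals $1$ at $t=0$. The second factor equals $V^k_x(x)$. This proves the first claim.

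\textbf{Derivatives.} Set $g(\tau):=\nu_w(\tau)-1$, so $g(0)=0$, and let $v(\tau):=V^k_x(w(\tau))$. Then
\[h_\xi(t)=\Phi\bigl(\xi^2 g(\xi t)\bigr)\,v(\xi t),\qquad \Phi(u)=(1+u)^p,\]
and $\Phi$ is smooth near $u=0$. By the Leibniz rule and Faà di Bruno, $h_\xi^{(n)}(0)$ is a sum of terms of the form
\[\Phi^{(j)}(0)\,\prod_{i=1}^{j}\Bigl(\xi^{2+m_i}g^{(m_i)}(0)\Bigr)\,\xi^{n_0}v^{(n_0)}(0),\]
where $m_i\ge 1$ and $\sum_i m_i+n_0=n$. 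Each derivative of $g(\xi t)$ or $v(\xi t)$ in $t$ produces one factor of $\xi$, and each factor $g(\xi t)$ carries an extra $\xi^2$. The $j=0$ term has $g$-factor $\Phi(0)=1$, because $g(0)=0$.

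Each term is therefore a monomial $\xi^{n+2j}$ times a coefficient independent of $\xi$. Collecting them, $h_\xi^{(n)}(0)$ is a polynomial in $\xi$ whose exponents are all at least $n$, which is the second claim.

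\textbf{Main obstacle.} The only delicate point is justifying the formula for $\tilde\Gamma$: one needs that $\tilde\exp_{\tilde x}$ is the product map and that its domain is consistent with $\tilde U=U\times\R$. Both follow from the product structure of the Levi-Civita connection, which was already observed in the proof that $\tilde U$ is geodesically convex.

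The remaining points are routine. The value of $\xi$ does not matter, since $\xi>1$ only keeps $w_\xi$ timelike, and the polynomial statement holds identically in $\xi$. One must also note that $\nu_w$ is smooth at $0$, so that $g$ is smooth.
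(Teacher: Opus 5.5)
Your proof is correct and takes essentially the same route as the paper. Both use the identity $\nu_{w_\xi}(t)=\xi^2(\nu_w(\xi t)-1)+1$ and $\tilde V^k_{\tilde x}(w_\xi(t))=V^k_x(w(\xi t))$, then count the powers of $\xi$ produced by differentiation; the only difference is bookkeeping, since you apply Fa\`a di Bruno to $(1+\xi^2 g(\xi t))^p$ directly (which even shows the exponents are exactly $n+2j$), whereas the paper inducts on ``good summands'' $\xi^k\nu_{w_\xi}^q h(\xi t)$, a class closed under differentiation with order increasing by at least one.
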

\begin{proof}
	We have
	\[\nu_{w_\xi}(t)=\frac{1}{t^2}(\Gamma_x(w(\xi t))-(\xi^2-1)t^2)=\xi^2 \frac{\Gamma_x(w(\xi t))}{(\xi t)^2}-\xi^2+1=\xi^2(\nu_w(\xi t)-1)+1\]
	for $t=0$, the first summand vanishes, so
	\[\nu_{w_\xi}(0)=1.\]
	Together with
	\[\tilde V^k_x(w_\xi(t))=V^k_x(w(\xi t)),\]
	this implies the first claim.
	
	For this proof, define a "good summand of order $k$" to be a function of the form
	\[g_\xi(t)=\xi^k \nu_{w_\xi}(t)^qh(\xi t)\]
	for some $q\in \R$ and some smooth function $h$. The derivative of this is
	\[g_\xi'(t)=q\xi^{k+3} \nu_{w_\xi}(t)^{q-1}\nu_w '(\xi t)h(\xi t)+\xi^{k+1} \nu_{w_\xi}(t)^qh'(\xi t),\]
	which is a sum of good summands of order at least $k+1$. As $(\nu_{w_\xi}^p V^k_x\circ w_\xi)$ is a good summand of order zero, we can conclude by induction that $(\nu_{w_\xi}^p \tilde V^k_x\circ w_\xi)^{(n)}$ is a sum of good summands of order at least $n$. Evaluating a good summand at $0$ yields
	\[g_\xi(0)=\xi^k h(0),\]
	i.e. a monomial in $\xi$ of the same order as the summand. This proves the second claim.
\end{proof}
We now have everything in hand to show the main theorem of this section:
\begin{thm}
	\label{prodmain}
	Assume that $\M '(f)(2k+2-d)\neq 0$. We have
	\[V^k_x(x)=\frac{4^k k!}{\pi^{\frac{1-d}{2}}\M'(f)(2k+2-d)}w_\xi^*(\tilde G)[f_s][[s^{2k+2-d}]][[\xi^0]].\]
\end{thm}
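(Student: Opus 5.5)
The plan is to apply Theorem \ref{intexp} on $\tilde M$, whose dimension is $d+1$, with the timelike curve $w_\xi$ through $\tilde x$. Then to identify the $s^{2k+2-d}$ coefficient and take its $\xi^0$ part using Lemma \ref{xidiff}.

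First, check that $w_\xi$ is admissible. It is future oriented and timelike near $0$, with $w_\xi(0)=\tilde x$. Its squared initial speed is $\xi^2\gamma(w'(0))-(\xi^2-1)=1$, so $\nu_{w_\xi}(0)=1$. By the product identity $\tilde V^k_{\tilde x}=V^k_x\circ\mathrm{pr}$, Theorem \ref{intexp} with $d$ replaced by $d+1$ gives
\[w_\xi^*(\tilde G_{\tilde x})[f_s]\sim\sum_{k,n} \tilde a(k,n)\left(\nu_{w_\xi}^{k-\frac{d+1}{2}+1}\tilde V^k_{\tilde x}\circ w_\xi\right)^{(2n)}(0)\, s^{2k+2n+2-d},\]
where $\tilde a(k,n)=a(k,n,f,d+1)$.

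Next, fix the power $s^{2k+2-d}$. The contributing pairs are $(k',n)$ with $k'+n=k$. For each such pair the coefficient $\tilde a(k',n)$ is independent of $\xi$. By Lemma \ref{xidiff}, the $2n$-th derivative at $0$ is a polynomial in $\xi$ with no monomials of order below $2n$. Hence every term with $n\ge1$ has vanishing $\xi^0$ coefficient. The only survivor is $n=0$, $k'=k$, whose value by Lemma \ref{xidiff} is $\tilde a(k,0)V^k_x(x)$, constant in $\xi$. Taking $[[\xi^0]]$ therefore yields
\[w_\xi^*(\tilde G)[f_s][[s^{2k+2-d}]][[\xi^0]]=\tilde a(k,0)V^k_x(x),\]
with
\[\tilde a(k,0)=\frac{\pi^{\frac{1-d}{2}}}{4^kk!}\M'(f)(2k+2-d).\]
Dividing by this, which is nonzero by hypothesis, gives exactly the claimed formula.

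The main obstacle is making the notion ``$[[s^{2k+2-d}]][[\xi^0]]$'' well defined, since the expansion is in $s$ for each fixed $\xi>1$. The $s$-coefficient is a polynomial in $\xi$ only for $\xi$ in the admissible range. So I would define the $\xi^0$ coefficient as the constant term of that polynomial, i.e.\ of its unique polynomial extension.

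I also need that the interval $I$ on which $w_\xi$ is defined and timelike, and the support condition on $f_s$, can be chosen for each fixed $\xi$ with $s$ small. This holds because $w_\xi(t)$ only involves $w(\xi t)$ and $\nu_{w_\xi}(t)=\xi^2(\nu_w(\xi t)-1)+1\to1$ as $t\to0$. Since the expansion is taken at fixed $\xi$ before extracting coefficients, no uniformity in $\xi$ is required.
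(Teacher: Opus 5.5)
Your proposal is correct and follows the paper's proof essentially step for step. You apply Theorem \ref{intexp} on $\tilde M$ in dimension $d+1$, collect the $s^{2k+2-d}$ coefficient as the sum over $l+n=k$, use Lemma \ref{xidiff} to kill every $n\ge 1$ term in the $\xi^0$ coefficient, and divide by $a(k,0,f,d+1)=\frac{\pi^{\frac{1-d}{2}}}{4^kk!}\M'(f)(2k+2-d)$. Your remarks on defining $[[\xi^0]]$ through the unique polynomial extension and on choosing the interval for each fixed $\xi$ are sensible clarifications that the paper leaves implicit; timelikeness of $w_\xi$ near $0$ is more directly justified by $\gamma(w_\xi'(t))\to 1$ than by $\nu_{w_\xi}(t)\to 1$.
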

\begin{proof}
	Applying Theorem \ref{intexp} for $w_\xi$ as $w$ and $\tilde P$ as $P$ yields
	\[w_\xi^*(\tilde G)[f_s]\sim\insum{k,n} a(k,n,f,d+1)(\nu_{w_\xi}^{k-\frac{d-1}{2}} \tilde V^k_x\circ w_\xi)^{(2n)}(0)s^{2k+2n+2-d}.\]
	Thus we have
	\[w_\xi^*(\tilde G)[f_s][[s^{2k+2-d}]]=\sum\limits_{l+n=k}a(l,n,f,d+1)(\nu_{w_\xi}^{l-\frac{d-1}{2}} \tilde V^l_x\circ w_\xi)^{(2n)}(0).\]
	Applying Lemma \ref{xidiff} tells us that we only get a constant part for $n=0$, which is
	\begin{align*}
		w_\xi^*(\tilde G)[f_s][[s^{2k+2-d}]][[\xi^0]]&=a(k,0,f,d+1)V^k_x(x)
	\end{align*}
	Rearranging, inserting the definition of $a$ and using that $\binom{\alpha}{0}$ is always $1$, we obtain
	\begin{align*}
		V^k_x(x)&=a(k,0,f,d+1)^{-1}w_\xi^*(\tilde G)[f_s][[s^{2k+2-d}]][[\xi^0]]] \\
		&=\frac{4^k k!}{\pi^{\frac{1-d}{2}}\M'(f)(2k+2-d)}w_\xi^*(\tilde G)[f_s][[s^{2k+2-d}]][[\xi^0]]].
	\end{align*}
\end{proof}
\section{Off-diagonal coefficients}
\label{ProdOffDiag}
The product approach can also be used to obtain values of the Hadamard coefficients away from the diagonal. Here we use a geodesic in the new space direction passing through a point $\tilde y$ rather that a timelike geodesic passing through $\tilde x$. Instead of the behavior at $\tilde x$, we now use the different vanishing behavior of the various Riesz distributions at the intersection of a space-line with the lightcone. Instead of a cut-off around zero, we need to rescale a cut-off function around these intersection points.
\begin{df}
	For $y\in I^\pm(x)$, where $I^\pm$ denotes the timelike future/past, define \[\iota_y(r):=(y,r).\] Fix $\chi\in C_c^\infty((-1,1))$ and set \[\chi_{\epsilon,x,y}(r):=|r|\chi\left(\frac{\Gamma_x(y)-r^2}{\epsilon}\right).\]
\end{df}
Once more we use wavefront calculus to show that our distributions can be restricted to the curves we want to consider. 
\begin{prop}
	 Let $y\in I_\pm(x)$. Then \penalty-50\hskip0pt $\iota_y^*(\tilde R^\pm(\alpha,\tilde x))$ is  well-defined and holomorphic and $\iota_y^*(\tilde G^\pm_{\tilde x})$ is well-defined.
\end{prop}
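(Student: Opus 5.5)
We argue via wavefront sets. By the pullback theorem, it suffices to check that the wavefront set of $\tilde R^\pm(\alpha,\tilde x)$ (respectively $\tilde G^\pm_{\tilde x}$) meets no conormal vector of the curve $\iota_y$, locally uniformly in $\alpha$. Holomorphicity then follows from continuity of the pullback on the space of distributions with wavefront set in a fixed closed cone, exactly as in Corollary \ref{Rieszpullback}. The curve $\iota_y$ is spacelike and points purely in the $\R$-direction of $\tilde M$. Its conormal at $(y,r)$ therefore consists of the nonzero covectors that annihilate $\partial_r$, i.e. covectors of the form $(\eta,0)$ with $\eta\in T^*_yM\setminus\{0\}$.

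First we locate where the curve can meet singularities. Since $y\in I^\pm(x)$, we have $\tilde\Gamma_{\tilde x}(y,r)=\Gamma_x(y)-r^2$. So $\iota_y$ passes through the interior of $\tilde J^\pm(\tilde x)$ for $r^2<\Gamma_x(y)$ and through the exterior for $r^2>\Gamma_x(y)$, and in both regions $\tilde R^\pm(\alpha,\tilde x)$ is smooth. The curve meets the light cone $\partial\tilde J^\pm(\tilde x)$ exactly at the two points $r=\pm\sqrt{\Gamma_x(y)}$, which are different from $\tilde x$ because $y\neq x$.

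At these two points we use Theorem \ref{WFRiesz}, transported by $\exp_{\tilde x}$. Away from the origin, the wavefront set of $\tilde R^\pm(\alpha,\tilde x)$ contains only multiples of $d\tilde\Gamma_{\tilde x}$, and this holds uniformly for $\alpha$ in compact sets. Hence it suffices to show $d\tilde\Gamma_{\tilde x}(\partial_r)\neq 0$ there. We compute
\[
d\tilde\Gamma_{\tilde x}(\partial_r)=-2r=\mp2\sqrt{\Gamma_x(y)}\neq 0,
\]
which is nonzero because $\Gamma_x(y)>0$. So $d\tilde\Gamma_{\tilde x}$ is not conormal to $\iota_y$, and the pullback of $\tilde R^\pm(\alpha,\tilde x)$ is well-defined and holomorphic. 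The same computation shows the curve is transversal to the light cone, which is the geometric content here.

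For $\tilde G^\pm_{\tilde x}$ we use the Hadamard expansion on $\tilde U$. For any $m$ we choose $N$ so that
\[
F:=\tilde G^\pm_{\tilde x}-\sum_{k=0}^N\tilde V^k_{\tilde x}\tilde R^\pm(2k+2,\tilde x)\in C^m .
\]
A continuous function pulls back trivially, and each summand is a smooth section times a Riesz distribution, which pulls back by the previous step. Hence $\iota_y^*\tilde G^\pm_{\tilde x}$ is well-defined. Note that $\iota_y$ lies in $\tilde U$ because $\tilde U=U\times\R$.

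The main obstacle is the uniformity required for holomorphicity: the wavefront bounds from Theorem \ref{WFRiesz} must hold in a fixed closed cone that misses the conormal, uniformly in $\alpha$. Away from the origin this follows because the Riesz distributions are the pullbacks $\sigma\gamma^*$ of a holomorphic family on $\R$ under the submersion $\gamma$. The exponential map preserves both the statement and the uniformity.
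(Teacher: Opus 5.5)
Your proposal is correct and follows the paper's proof essentially step for step: you transport Theorem \ref{WFRiesz} via $\exp_{\tilde x}$, check that $d\tilde\Gamma_{\tilde x}$ does not annihilate $\partial_r$ at the two light-cone crossings $r=\pm\sqrt{\Gamma_x(y)}$, and handle $\tilde G^\pm_{\tilde x}$ through the Hadamard expansion with a $C^m$ remainder. Your value $-2r$ is the correct sign (the paper's $2r$ is a harmless slip), and your explicit remarks on uniformity in $\alpha$ and on $\iota_y$ avoiding $\tilde x$ make explicit what the paper leaves implicit: only the wavefront away from the origin matters, so the individual $\tilde R^\pm$ are covered, not just their difference.
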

\begin{proof}

	Using Theorem \ref{WFRiesz} and the fact that wavefront calculus is compatible with diffeomorphisms (in this case the exponential map), we obtain that the Riesz distributions $\tilde R(\alpha,\tilde x)$ are holomorphic (as a function of $\alpha$) in the space of distributions whose wavefront away from $\tilde x$ consists of multiples of $d\Gamma_{\tilde x}$ at points where $\Gamma_{\tilde x}=0$ (in particular, $r\neq 0$). At (preimages of) such points
	\[d(\tilde\Gamma_{\tilde x}\circ\iota_y)(r)=2r\]
	is non-zero, so $d\tilde \Gamma_{\tilde x}$ is not conormal to the image of $\iota_y$. Thus $\tilde R(\alpha,{\tilde x})$ is holomorphic in the space of distributions whose wavefront set is not conormal to $\iota_y$, so the pullback is well-defined and holomorphic. By using the Hadamard expansion with continuous remainder, this implies that the pullback of $\tilde G^\pm_{\tilde x}$ is also well-defined.
\end{proof}
Now we can perform the restriction and pair with the cut-off we chose. We first compute the result for a single summand in the Hadamard expansion.
\begin{lemma}
	\label{summandev}
 For $k\in\N$, $\alpha\in \C$, $y\in I^\pm(x)$ and $\epsilon<\Gamma_x(y)$, we have 
 \[\iota_y^*(\tilde V^k_{\tilde x}\tilde R^\pm({\alpha},{\tilde x}))[\chi_{\epsilon,x,y}]=\frac{2^{1-\alpha}\pi^{\tfrac{1-d}{2}}}{\Gamma(\frac{\alpha}{2})}\frac{\M(\chi)}{\Gamma}(\tfrac{\alpha+1-d}{2})V^k_x(y)\epsilon^{\tfrac{\alpha+1-d}{2}}\]
\end{lemma}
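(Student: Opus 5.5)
We prove the identity first for $\Re(\alpha)$ large, where everything is an honest function, and then extend it to all $\alpha\in\C$ by analytic continuation. This mirrors the proof of Proposition \ref{WRint}.

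\textbf{Explicit form of the integrand.} By the product structure, the geodesic from $\tilde x=(x,0)$ to $(y,r)$ is the pair of the geodesic from $x$ to $y$ in $U$ and the straight line from $0$ to $r$. Hence
\[\tilde\Gamma_{\tilde x}(y,r)=\Gamma_x(y)-r^2.\]
Moreover, $(y,r)\in J^\pm(\tilde x)$ exactly when $r^2\le\Gamma_x(y)$, given that $y\in I^\pm(x)$. Also $\tilde V^k_{\tilde x}(y,r)=V^k_x(y)$ is independent of $r$.

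\textbf{Large $\Re(\alpha)$.} For $\Re(\alpha)>d+1$, the Riesz distribution on $\tilde U$ (dimension $d+1$) is the function
\[\tilde c_\alpha(\Gamma_x(y)-r^2)_+^{\frac{\alpha-d-1}{2}},\qquad \tilde c_\alpha=\frac{2^{1-\alpha}\pi^{\frac{1-d}{2}}}{\Gamma(\frac{\alpha}{2})\Gamma(\frac{\alpha-d+1}{2})}.\]
The pairing therefore becomes
\[\tilde c_\alpha V^k_x(y)\intR (\Gamma_x(y)-r^2)_+^{\frac{\alpha-d-1}{2}}|r|\,\chi\!\left(\tfrac{\Gamma_x(y)-r^2}{\epsilon}\right)dr.\]
The integrand is even in $r$, so we take twice the integral over $r>0$. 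Substituting $u=\Gamma_x(y)-r^2$ gives $du=-2r\,dr$, so the factor $|r|$ is exactly absorbed and the factor $2$ cancels. We get $\int_0^{\Gamma_x(y)}u^{\frac{\alpha-d-1}{2}}\chi(u/\epsilon)\,du$.

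Since $\epsilon<\Gamma_x(y)$ and $\supp\chi\subset(-1,1)$, the cutoff $\chi(u/\epsilon)$ vanishes for $u\ge\Gamma_x(y)$, so we may extend the integral to $(0,\infty)$. Rescaling $u=\epsilon v$ then yields
\[\epsilon^{\frac{\alpha+1-d}{2}}\,\M(\chi)\bigl(\tfrac{\alpha+1-d}{2}\bigr).\]
Multiplying by $\tilde c_\alpha$ gives exactly the claimed expression, with the factor $\Gamma(\frac{\alpha-d+1}{2})$ of $\tilde c_\alpha$ in the denominator of $\frac{\M(\chi)}{\Gamma}$.

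\textbf{Continuation to all $\alpha$.} The left side is holomorphic in $\alpha$ by the preceding proposition, since $\iota_y^*\tilde R^\pm(\alpha,\tilde x)$ is holomorphic and we pair it with the fixed test function $\chi_{\epsilon,x,y}$ times a smooth function. The function $\chi_{\epsilon,x,y}$ is smooth because the $r$ with $\Gamma_x(y)-r^2\in\supp\chi(\cdot/\epsilon)$ stay away from $r=0$, as $\epsilon<\Gamma_x(y)$.

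This is the main point to check: $\M(\chi)/\Gamma$ at $\frac{\alpha+1-d}{2}$ must be entire. We argue as follows. The Mellin transform of the compactly supported smooth $\chi$, restricted to $(0,\infty)$, has at most simple poles at $0,-1,-2,\dots$. These are cancelled by the poles of $\Gamma$ at the same points, so the quotient is entire. The right side is therefore holomorphic as well, and since both sides agree for $\Re(\alpha)$ large, they agree for all $\alpha\in\C$ by the identity theorem.
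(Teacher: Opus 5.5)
Your proposal is correct and follows the paper's proof: you compute the pairing for large $\Re(\alpha)$ via the substitution $u=\Gamma_x(y)-r^2$ and the rescaling $u=\epsilon v$, which produces $\tilde c_\alpha\,\M(\chi)(\tfrac{\alpha+1-d}{2})$, and then extend to all $\alpha$ by analytic continuation. Your extra checks, that $\chi_{\epsilon,x,y}$ is smooth because $\epsilon<\Gamma_x(y)$ and that $\M(\chi)/\Gamma$ is entire, make explicit what the paper compresses into ``both sides are holomorphic in $\alpha$.''
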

\begin{proof}
	For $\Re(\alpha)>0$ large enough (and $\alpha$ not a zero of $\tilde c_\alpha$), we have
	\begin{align*}
		&\iota_y^*(\tilde V^k_{\tilde x}\tilde R({\alpha},{\tilde x}))[\chi_{\epsilon,x,y}]\\
		&=\intR \tilde V^k_{\tilde x}(y,r)R({\alpha},{\tilde x})(y,r)\chi_{\epsilon,x,y}(r) dr\\
		&=\int\limits_{r^2<\Gamma_x(y)} V^k_{x}(y)\tilde c_\alpha(\Gamma_x(y)-r^2)^{\tfrac{\alpha-d-1}{2}}|r|\chi\left(\frac{\Gamma_x(y)-r^2}{\epsilon}\right) dr\\
		&=\int\limits_0^{\Gamma_x(y)} V^k_{x}(y)\tilde c_\alpha s^{\tfrac{\alpha-d-1}{2}}\chi\left(\frac{s}{\epsilon}\right) ds\\
		&=\epsilon \int\limits_0^{\tfrac{\Gamma_x(y)}{\epsilon}} V^k_{x}(y)\tilde c_\alpha (\epsilon t)^{\tfrac{\alpha-d-1}{2}}\chi\left(t\right) dt\\
		&=\epsilon^{\tfrac{\alpha-d+1}{2}} \tilde c_\alpha V^k_{x}(y)\int\limits_0^{\infty}   t^{\tfrac{\alpha-d-1}{2}}\chi\left(t\right) dt\\
		&=\frac{2^{1-\alpha}\pi^{\frac{1-d}{2}}}{\Gamma(\tfrac{\alpha}{2})\Gamma(\tfrac{\alpha-d+1}{2})} \M(\chi)(\tfrac{\alpha-d+1}{2})V^k_{x}(y)\epsilon^{\tfrac{\alpha-d+1}{2}}.\\
	\end{align*}
	As both sides are holomorphic in $\alpha$, equality holds for all $\alpha$.
\end{proof}
The Hadamard expansion now leads to an asymptotic expansion for the expression with the full Green's operators.
\begin{thm}
	For $y\in I^\pm(x)$ we have as an asymptotic expansion in $\epsilon\to0$:
	\[\iota_y^*(\tilde G^\pm_{\tilde x})[\chi_{\epsilon,x,y}]\sim \insum{k}\frac{\pi^{\tfrac{1-d}{2}}}{2^{2k+1}k!}\frac{\M(\chi)}{\Gamma}(k+\tfrac{3-d}{2})V^k_x(y)\epsilon^{k+\tfrac{3-d}{2}}\]
\end{thm}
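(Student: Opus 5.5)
We combine the Hadamard expansion for $\tilde G^\pm_{\tilde x}$ on $\tilde U$ with Lemma \ref{summandev} applied at $\alpha=2k+2$, mirroring the proof of Theorem \ref{intexp}.

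First we check that Lemma \ref{summandev} at $\alpha=2k+2$ gives the stated coefficient. The prefactor is
\[\frac{2^{1-(2k+2)}\pi^{\frac{1-d}{2}}}{\Gamma(k+1)}=\frac{\pi^{\frac{1-d}{2}}}{2^{2k+1}k!},\]
the Mellin argument is $\tfrac{2k+3-d}{2}=k+\tfrac{3-d}{2}$, and the power of $\epsilon$ is the same. Since $\tilde V^k_{\tilde x}(y,r)=V^k_x(y)$, the $k$-th summand contributes exactly the $k$-th term of the claimed expansion.

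Next, fix $m\in\N$ and choose $N$ large enough that
\[F:=\tilde G^\pm_{\tilde x}-\sum_{k=0}^N\tilde V^k_{\tilde x}\tilde R^\pm(2k+2,\tilde x)\in C^m(\tilde U).\]
This is possible by the Hadamard expansion on $\tilde U$, which is available because $\tilde U$ satisfies the standing assumptions. Then $\iota_y^*(\tilde G^\pm_{\tilde x})[\chi_{\epsilon,x,y}]$ equals the finite sum of the summand terms, computed above, plus $\int_\R F(y,r)\chi_{\epsilon,x,y}(r)\,dr$. It remains to show this remainder is $O(\epsilon^{m'})$, where $m'\to\infty$ as $m\to\infty$.

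The remainder estimate is the main step. Since $y\in I^\pm(x)$, on the support of $\chi_{\epsilon,x,y}$ we have $r^2>\Gamma_x(y)-\epsilon>0$ for small $\epsilon$, so $|r|$ is bounded away from zero. There the substitution $u=\Gamma_x(y)-r^2$ is a diffeomorphism on each of the two branches $r>0$ and $r<0$, with $|r|\,dr=\tfrac12\,du$; the factor $|r|$ in $\chi_{\epsilon,x,y}$ was chosen precisely to absorb this Jacobian. The remainder becomes
\[\frac12\int h(u)\chi(u/\epsilon)\,du,\qquad h(u):=F(y,\sqrt{\Gamma_x(y)-u})+F(y,-\sqrt{\Gamma_x(y)-u}),\]
where $h$ is $C^m$ near $u=0$. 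Moreover, $F$ is supported in $\tilde J^\pm(\tilde x)$, and along $\iota_y$ the boundary of that set is exactly $\{u=0\}$, since $\tilde\Gamma_{\tilde x}(y,r)=\Gamma_x(y)-r^2=u$. So $h$ vanishes for $u<0$, and hence all its derivatives up to order $m$ vanish at $u=0$. This is the same boundary-vanishing argument as in the proof of Theorem \ref{intexp}. Because $\chi$ is supported in $(-1,1)$, the scaled factor $\chi(u/\epsilon)$ localizes to $|u|<\epsilon$. Taylor's theorem then gives $|h(u)|\le C|u|^m$ there, so the integral is $O(\epsilon^{m+1})$, as in Lemma \ref{IntRem}.

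Finally, since $m$ was arbitrary, the finite sums of summand contributions give the asymptotic expansion claimed. The point needing the most care is the remainder step: confirming that $h$ is genuinely $C^m$ near $u=0$ and that its support lies on one side of $u=0$. Both follow because $r\mapsto r^2$ is a local diffeomorphism near $r=\pm\sqrt{\Gamma_x(y)}$ and because $\tilde J^\pm(\tilde x)\cap\iota_y=\{r^2\le\Gamma_x(y)\}$.
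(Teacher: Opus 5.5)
Your proposal is correct and follows the paper's own proof: the Hadamard expansion on $\tilde U$, Lemma \ref{summandev} at $\alpha=2k+2$ for each summand, and a remainder estimate based on the $C^m$ remainder vanishing to order $m$ at the light-cone boundary $\{r^2=\Gamma_x(y)\}$. Your substitution $u=\Gamma_x(y)-r^2$ simply makes explicit the paper's bound $|E_N(y,r)|\le C|\tilde\Gamma_{\tilde x}(y,r)|^m$, which the paper justifies by $\tilde\Gamma_{\tilde x}$ vanishing only to first order at $(y,\pm\sqrt{\Gamma_x(y)})$.
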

\begin{proof}
	We use the Hadamard expansion
	\[\tilde G^\pm_{\tilde x}=\sum\limits_{k=0}^N\tilde V^k_{\tilde x}\tilde R^\pm(\alpha,{\tilde x})+E_N\]
	where for any $m\in \N$, we can get $E_N\in C^m$ by choosing $N$ large enough. We have already taken care of the summands, so it remains to estimate the remainder term. Note that as $G^\pm$ and $R^\pm$ are supported in $J^\pm(x)$, so is $E_N$. This means that it vanishes of order $m$ near the boundary of the lightcone. As $\tilde\Gamma_{\tilde x}$ vanishes of order one at $(y,\pm\sqrt{\Gamma_x(y)})$, we have
	\[|E_N(y,r)|\leq C|\tilde \Gamma_{\tilde x}|^m\]
	for some constant $C$.
	Since $\chi_{\epsilon,x,y}(r)$ vanishes if $|\tilde\Gamma_{\tilde x}(r)|>\epsilon$, we obtain
	\begin{align*}
		|\iota_y^*(E_N)[\chi_{\epsilon,x,y}]|&\leq\intR |E_N(y,r) \chi_{\epsilon,x,y}(r)| dr\\
		&\leq C\epsilon^m\intR|\chi_{\epsilon,x,y}(r)| dr\\
		&=O(\epsilon^m).
	\end{align*}
	Using Lemma \ref{summandev} (with $\alpha=2k+2$), we can now conclude
	\begin{align*}
		&\iota_y^*(\tilde G^\pm_{\tilde x})[\chi_{\epsilon,x,y}]\\
		&=\sum\limits_{k=0}^N\iota_y^*(\tilde V^k_{\tilde x}\tilde R^\pm(\alpha,{\tilde x}))[\chi_{\epsilon,x,y}]+\iota_y^*(E_N)[\chi_{\epsilon,x,y}]\\
		&=\sum\limits_{k=0}^N\frac{2^{-1-2k}\pi^{\tfrac{1-d}{2}}}{\Gamma(\frac{2k+2}{2})}\frac{\M(\chi)}{\Gamma}(\tfrac{2k+3-d}{2})V^k_x(y)\epsilon^{\tfrac{2k+3-d}{2}}+O(\epsilon^m)\\
		&=\sum\limits_{k=0}^N\frac{\pi^{\tfrac{1-d}{2}}}{2^{2k+1}k!}\frac{\M(\chi)}{\Gamma}(k+\tfrac{3-d}{2})V^k_x(y)\epsilon^{k+\tfrac{3-d}{2}}+O(\epsilon^m)
	\end{align*}
	As $m$ can be chosen arbitrarily by choosing $N$ large enough, this concludes the proof.
\end{proof}
\begin{rem}
	Looking at the remainder term in the proof, we see that it is locally uniformly bounded. By making $N$ bigger, we can ensure the same for any fixed derivative of the remainder term. Thus if we want, we can see the above asymptotic expansion in $C^\infty(U\times U)$ rather than pointwise.
\end{rem}

Taking the coefficient for a fixed power of $\epsilon$ and solving for $V^k_x(y)$, one immediately obtains the following.
\begin{cor}
\label{offdV}
	Assume that $\frac{\M(\chi)}{\Gamma}$ does not vanish at $k+\tfrac{3-d}{2}$.
	Then for $y\in I^\pm(x)$ and $k\in \N$, we have
	\[V^k_x(y)=2^{2k+1}k!\pi^{\tfrac{d-1}{2}}\frac{\Gamma}{\M(\chi)}(k+\tfrac{3-d}{2})\iota_y^*(\tilde G^\pm_x)[\chi_{\epsilon,x,y}][[\epsilon^{k+\tfrac{3-d}{2}}]].\]
\end{cor}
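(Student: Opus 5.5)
The corollary follows directly from the asymptotic expansion in the preceding theorem. Fix $y\in I^\pm(x)$ and $k\in\N$. That theorem gives, as $\epsilon\to0$,
\[\iota_y^*(\tilde G^\pm_{\tilde x})[\chi_{\epsilon,x,y}]\sim \insum{j}\frac{\pi^{\tfrac{1-d}{2}}}{2^{2j+1}j!}\frac{\M(\chi)}{\Gamma}(j+\tfrac{3-d}{2})V^j_x(y)\epsilon^{j+\tfrac{3-d}{2}}.\]
The exponents $j+\tfrac{3-d}{2}$ are pairwise distinct for distinct $j$, since they differ by nonzero integers. Hence the coefficient of a given power of $\epsilon$ in such an expansion is uniquely determined. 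Indeed, two asymptotic expansions in a strictly increasing sequence of real powers must agree term by term: subtract them, divide by the lowest power with a nonzero coefficient, and let $\epsilon\to0$.

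The first step is therefore to read off the coefficient of $\epsilon^{k+\tfrac{3-d}{2}}$:
\[\iota_y^*(\tilde G^\pm_{\tilde x})[\chi_{\epsilon,x,y}][[\epsilon^{k+\tfrac{3-d}{2}}]]=\frac{\pi^{\tfrac{1-d}{2}}}{2^{2k+1}k!}\frac{\M(\chi)}{\Gamma}(k+\tfrac{3-d}{2})V^k_x(y).\]

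The second step is to solve for $V^k_x(y)$. This is possible because of the standing hypothesis that $\frac{\M(\chi)}{\Gamma}$ does not vanish at $k+\tfrac{3-d}{2}$. Multiplying by the reciprocal $2^{2k+1}k!\pi^{\tfrac{d-1}{2}}\frac{\Gamma}{\M(\chi)}(k+\tfrac{3-d}{2})$ gives exactly the claimed formula. The $\tilde G^\pm_x$ in the statement is understood as $\tilde G^\pm_{\tilde x}$.

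The only point needing care is the interpretation of $\frac{\M(\chi)}{\Gamma}$ when $k+\tfrac{3-d}{2}$ is a non-positive integer. There $\Gamma$ has a pole, and $\M(\chi)$ may also have one, since $\chi$ need not vanish at $0$. The quotient is meant as the meromorphic, in fact entire, continuation of $\M(\chi)/\Gamma$, exactly as in Lemma \ref{summandev}, where it arose by analytic continuation in $\alpha$. So I would point out that this quotient is entire, because $\M(\chi)$ has at most simple poles at non-positive integers. The nonvanishing hypothesis then refers to its continued value, and the inversion is legitimate. Beyond this the argument is purely formal, and I do not expect any real obstacle.
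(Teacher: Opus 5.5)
Your proposal is correct and matches the paper's argument: read off the coefficient of $\epsilon^{k+\tfrac{3-d}{2}}$ from the preceding theorem's expansion, then divide by the nonvanishing prefactor, whose reciprocal is exactly the stated constant. Your remarks on the uniqueness of expansion coefficients and on $\frac{\M(\chi)}{\Gamma}$ being entire are sound, and they make explicit what the paper leaves implicit.
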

For a global version of this, we can define
\[\chi_\epsilon(x,r',y,r):=\chi_{\epsilon,x,y}(r-r')\]
and
\[\pi(x,r,y,r'):=(x,r',y).\]
Then pushforward by $\pi$ corresponds to integration over the last variable. Wavefront calculus tells us that $\pi_*(\tilde G^\pm)$ is well-defined and smooth and that
\[\iota_y^*(\tilde G^\pm)[\chi_{\epsilon,x,y}]=\pi_*(\chi_\epsilon\tilde G^\pm)(x,0,y).\]
Thus the above can be rephrased as
\[V^k=2^{2k+1}k!\pi^{\tfrac{d-1}{2}}\frac{\Gamma}{\M(\chi)}(k+\tfrac{3-d}{2})\pi_*(\chi_\epsilon\tilde G^\pm)(\cdot,0,\cdot)[[\epsilon^{k+\tfrac{3-d}{2}}]].\]

\printbibliography
\end{document}